\documentclass[10pt]{amsart}
\usepackage{appendix}
\usepackage{graphicx,enumitem,dsfont}
\usepackage{verbatim}
\usepackage[usenames]{color}
\usepackage[all]{xy}
\usepackage{amsmath, mathrsfs, amssymb, amsthm}
\usepackage[colorlinks]{hyperref}
\newtheorem{theorem}{Theorem}[section]

\newtheorem{lemma}[theorem]{Lemma}
\newtheorem{corollary}[theorem]{Corollary}
\newtheorem{remark}[theorem]{Remark}
\newtheorem{conjecture}[theorem]{Conjecture}

\newtheorem{definition}[theorem]{Definition}

\theoremstyle{definition}

\allowdisplaybreaks
\numberwithin{equation}{section}

\title[On a conjecture of Morel]{On a conjecture of Morel}

\author{Utsav Choudhury}
\address{Institut f\"ur Mathematik\\ Universit\"at Z\"urich\\ Winterthurerstrasse 190\\ CH-8057 Z\"urich\\ Switzerland}
\email{utsav.choudhury@math.uzh.ch}

\thanks{Research supported by the Swiss National Science Foundation}
\date{}
\keywords{$\mathbb{A}^1$-homotopy theory, $H$-spaces, homotopy pullback.}

\subjclass[2010]{Primary 14F42; Secondary 18F99}

\begin{document}

\begin{abstract}
In this note we prove that the $\mathbb{A}^1$-connected component sheaf $a_{Nis}(\pi_0^{\mathbb{A}^1}(\mathcal{X}))$ of an $H$-group
$\mathcal{X}$ is $\mathbb{A}^1$-invariant.   
\end{abstract}

\maketitle

\tableofcontents
\section{Introduction}

Let $Sm/k$ denote the category of smooth, separated $k$-schemes
and let \linebreak$PSh(Sm/k)$ denote the category of presheaves of sets on $Sm/k$.
A functor\linebreak$\mathcal{X} : \bigtriangleup^{op} \to PSh(Sm/k)$ is called a simplicial presheaf or a space. Here $\bigtriangleup$ is the category of simplices.
Let $\bigtriangleup^{op}PSh(Sm/k)$ denote the category of spaces.

$\bigtriangleup^{op}PSh(Sm/k)$ has a local model category structure with respect to the\linebreak Nisnevich topology called the injective Nisnevich model structure.  
A morphism $f: \mathcal{X} \to \mathcal{Y}$ is a
weak equivalence if the induced morphism on the Nisnevich stalks are weak equivalences of simplicial sets. Cofibrations are sectionwise injective\linebreak morphisms and fibrations are defined using the right lifting property (see \cite{jar1, mv}). The resulting homotopy category is denoted by $\mathbf{H}_s(Sm/k)$. 

The Bousfield localisation of the local model structure on   
\linebreak$\bigtriangleup^{op}PSh(Sm/k)$ with respect to the class of maps 
$\mathcal{X} \times \mathbb{A}^1 \to \mathcal{X}$ is called the\linebreak $\mathbb{A}^1$-model structure (the $\mathbb{A}^1$-model structure for simplicial sheaves on $Sm/k$\linebreak described in \cite{mv} was extended to simplicial presheaves in \cite{jar2}). The resulting homotopy category is denoted by $\mathbf{H}(k)$.

For any space $\mathcal{X}$, 
define $\pi_0^{\mathbb{A}^1} (\mathcal{X})$ to be the presheaf 
$$U \in Sm/k \mapsto Hom_{\mathbf{H}(k)}(U, \mathcal{X}).$$
The presheaf  $\pi_0^{\mathbb{A}^1}(\mathcal{X})$ is homotopy invariant, i.e., for any $U \in Sm/k$ the morphism $\pi_0^{\mathbb{A}^1} (\mathcal{X})(U) \to  \pi_0^{\mathbb{A}^1} (\mathcal{X})(\mathbb{A}^1_U)$, induced by the projection
$\mathbb{A}^1_U \to U$, is bijective. 
 
Let $a_{Nis} : PSh(Sm/k) \to Sh_{Nis}(Sm/k)$
denote the Nisnevich\linebreak sheafification functor. The following conjecture of Morel states that the above property remains true after Nisnevich sheafification. 

\begin{conjecture}
For any $U \in Sm/k$, the morphism $$a_{Nis}(\pi_0^{\mathbb{A}^1} (\mathcal{X}))(U) \to  a_{Nis}(\pi_0^{\mathbb{A}^1} (\mathcal{X}))(\mathbb{A}^1_U),$$ induced by the projection
$\mathbb{A}^1_U \to U$, is bijective.
\end{conjecture}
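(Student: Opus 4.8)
\medskip
The conjecture looks subtle in full generality, and the statement I would aim for is the one treated in this note, namely the case where $\mathcal{X}$ is an $H$-group. The plan is to realize $a_{Nis}(\pi_0^{\mathbb{A}^1}(\mathcal{X}))$ as the $\mathbb{A}^1$-fundamental group sheaf of a pointed, $\mathbb{A}^1$-connected space, and then to invoke Morel's theorem that for any pointed, $\mathbb{A}^1$-connected space $\mathcal{Y}$ the sheaf $\pi_1^{\mathbb{A}^1}(\mathcal{Y})$ is strongly $\mathbb{A}^1$-invariant. In particular $\pi_1^{\mathbb{A}^1}(\mathcal{Y})(U)\to\pi_1^{\mathbb{A}^1}(\mathcal{Y})(\mathbb{A}^1_U)$ is then bijective for every $U\in Sm/k$, which is exactly the conclusion of the conjecture for such $\mathcal{X}$.

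Concretely, I would model $\mathcal{X}$ by a simplicial sheaf of groups (possible since $\mathcal{X}$ is an $H$-group) and form the bar construction $B\mathcal{X}$: this is a pointed, simplicially connected space equipped with a natural simplicial weak equivalence $\mathcal{X}\xrightarrow{\ \sim\ }\Omega_sB\mathcal{X}$. Because $B\mathcal{X}$ is simplicially connected, $L_{\mathbb{A}^1}B\mathcal{X}$ is $\mathbb{A}^1$-connected, and because $\Omega_s=\underline{Hom}_*(S^1_s,-)$ sends $\mathbb{A}^1$-fibrant spaces to $\mathbb{A}^1$-fibrant spaces, the whole argument reduces to showing that the canonical map
$$\mathcal{X}\ \simeq\ \Omega_sB\mathcal{X}\ \longrightarrow\ \Omega_sL_{\mathbb{A}^1}B\mathcal{X}$$
is an $\mathbb{A}^1$-weak equivalence. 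Granting this, $\Omega_sL_{\mathbb{A}^1}B\mathcal{X}$ is an $\mathbb{A}^1$-fibrant model of $\mathcal{X}$, whence
$$a_{Nis}(\pi_0^{\mathbb{A}^1}(\mathcal{X}))\ =\ a_{Nis}(\pi_0(\Omega_sL_{\mathbb{A}^1}B\mathcal{X}))\ =\ a_{Nis}(\pi_1(L_{\mathbb{A}^1}B\mathcal{X}))\ =\ \pi_1^{\mathbb{A}^1}(B\mathcal{X}),$$
and Morel's theorem applied to $\mathcal{Y}=B\mathcal{X}$ finishes the proof.

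The main obstacle is precisely the displayed claim, i.e.\ that applying $\Omega_s$ to the $\mathbb{A}^1$-localization map $B\mathcal{X}\to L_{\mathbb{A}^1}B\mathcal{X}$ remains an $\mathbb{A}^1$-weak equivalence; equivalently, that $\mathbb{A}^1$-localization preserves the homotopy pullback square underlying the path--loop fibration $\mathcal{X}\to E\mathcal{X}\to B\mathcal{X}$, where $E\mathcal{X}$ is simplicially contractible (the universal principal $\mathcal{X}$-bundle). This is not formal: $\Omega_s$ only preserves $\mathbb{A}^1$-weak equivalences between $\mathbb{A}^1$-fibrant spaces, and $B\mathcal{X}$ is not $\mathbb{A}^1$-fibrant. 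Moreover the standard results guaranteeing that $L_{\mathbb{A}^1}$ preserves such a fibration sequence require the base to be $\mathbb{A}^1$-simply connected, which $B\mathcal{X}$ need not be -- and that hypothesis cannot be removed cheaply, since the relevant monodromy is controlled by $\pi_1^{\mathbb{A}^1}(B\mathcal{X})=a_{Nis}(\pi_0^{\mathbb{A}^1}(\mathcal{X}))$, the very sheaf whose $\mathbb{A}^1$-invariance we are after. Breaking this circularity is where I expect the real work to be: I would exploit the principal $\mathcal{X}$-bundle structure of $E\mathcal{X}\to B\mathcal{X}$ to analyse the behaviour of the relevant homotopy pullbacks under $L_{\mathbb{A}^1}$ directly, rather than through a connectivity assumption on the base. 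The remaining ingredients -- rigidifying an $H$-group to a simplicial sheaf of groups, preservation of $\mathbb{A}^1$-locality under $\Omega_s$, and the fact that the simplicially connected space $B\mathcal{X}$ is $\mathbb{A}^1$-connected -- are standard.
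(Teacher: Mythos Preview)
Your strategy cannot succeed: the paper's Section~4 constructs an explicit counterexample to the step you single out as the ``main obstacle''. If the map $\mathcal{X}\simeq\Omega_s B\mathcal{X}\to\Omega_s L_{\mathbb{A}^1}B\mathcal{X}$ were an $\mathbb{A}^1$-weak equivalence for every $H$-group $\mathcal{X}$, then, as you observe, one would have $a_{Nis}(\pi_0^{\mathbb{A}^1}(\mathcal{X}))\cong\pi_1^{\mathbb{A}^1}(B\mathcal{X})$, and Morel's theorem would force this sheaf to be \emph{strongly} $\mathbb{A}^1$-invariant, not merely $\mathbb{A}^1$-invariant. But take $\mathcal{X}=\mathbb{Z}(\mathbb{G}_m)$, the reduced free abelian Nisnevich sheaf on $\mathbb{G}_m$: this is a discrete sheaf of abelian groups, in particular an $H$-group; it is $\mathbb{A}^1$-invariant, hence $\mathbb{A}^1$-local and equal to its own $a_{Nis}(\pi_0^{\mathbb{A}^1})$; yet it is \emph{not} strongly $\mathbb{A}^1$-invariant (if it were, the universal property of $\underline{K}_1^{MW}$ would make the symbol map $\mathbb{Z}(\mathbb{G}_m)\to\underline{K}_1^{MW}$ split injective, which it is not). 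So the circularity you correctly flag is not a technical nuisance to be dissolved by exploiting the principal bundle structure --- the conclusion you would draw from dissolving it is simply false. A smaller but related issue: rigidifying an $H$-group in the sense used here (one-stage homotopy associativity, no higher coherences) to an actual simplicial sheaf of groups is not standard; one needs an $A_\infty$-structure for that, and the definition does not provide it.

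The paper's route avoids delooping entirely and works directly with $\pi_0$. The ingredients are: (i) Gabber's presentation lemma, giving that for an $\mathbb{A}^1$-fibrant space and an essentially smooth henselian local $X$ the pointed map $\pi_0(\mathcal{X})(X)\to\pi_0(\mathcal{X})(K(X))$ has trivial kernel; (ii) the $H$-group structure (or a transitive action, for homogeneous spaces), which upgrades ``trivial kernel'' to honest injectivity; (iii) a sheaf argument passing from injectivity on henselian stalks to injectivity of restriction to any dense open; and (iv) a separate argument over one-dimensional schemes, via the Brown--Gersten property and a $\pi_0$-surjectivity lemma for homotopy pullbacks, showing that the presheaf $\pi_0^{\mathbb{A}^1}(\mathcal{X})$ already agrees with its sheafification on $\mathbb{A}^1_F$ for every field $F$. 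Steps (iii) and (iv) together reduce $\mathbb{A}^1$-invariance of the sheaf to $\mathbb{A}^1$-invariance of the presheaf at field points, which holds for any space.
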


In this paper, we prove the conjecture (theorem \ref{important theorem}) for $H$-groups (definition \ref{main definition}) and homogeneous spaces on these (see definitions \ref{action definition}, \ref{homogeneous definition}). 

\medskip

\noindent
\textbf{Acknowledgements.}\,
I thank Professor  J.~Ayoub for his suggestions and guidance during this project.


\section{Generalities on the Nisnevich local model structure}
In this section we briefly recall the Nisnevich Brown-Gersten property and 
give some consequences on the $\pi_0$ functor.

Recall (\cite[Definition 3.1.3]{mv}) that a cartesian square in $Sm/k$ 
$$
 \xymatrix{                                   
W \ar[r] \ar[d] & V \ar[d]^p \\
U \ar[r]^i  & X,}
$$
is called an elementary distinguished square (in the Nisnevich topology), 
if $p$ is an \'etale morphism and $i$ is an open embedding such that $p^{-1}(X - U) \to (X-U)$
is an isomorphism (endowing these closed subsets with the reduced subscheme structure). 

A space $\mathcal{X}$ is said to satisfy the Nisnevich Brown-Gersten property if for any 
elementary distinguished square in $Sm/k$ as above, the induced square of simplicial sets
$$
 \xymatrix{                                   
\mathcal{X}(X) \ar[r] \ar[d] & \mathcal{X}(V) \ar[d] \\
\mathcal{X}(U) \ar[r]  & \mathcal{X}(W)}
 $$
is homotopy cartesian (see \cite[Definition 3.1.13]{mv}). 

Any fibrant space for the Nisnevich local model structure satisfies the Nisnevich Brown-Gersten property  (\cite[Remark 3.1.15]{mv}). 

A space is $\mathbb{A}^1$-fibrant if and only if it is fibrant in the local model structure and $\mathbb{A}^1$-local (\cite[Proposition 2.3.19]{mv}).

There exist endofunctors $Ex$ (resp. $Ex_{\mathbb{A}^1}$) of $\bigtriangleup^{op}PSh(Sm/k)$ such that for any space $\mathcal{X}$, the object $Ex(\mathcal{X})$ is fibrant
(resp. $Ex_{\mathbb{A}^1}\mathcal{X}$ is $\mathbb{A}^1$-fibrant). Moreover, there exists a natural morphism $\mathcal{X} \to Ex(\mathcal{X})$ (resp. $\mathcal{X} \to Ex_{\mathbb{A}^1}(\mathcal{X})$) which is a local weak\linebreak equivalence (resp.   $\mathbb{A}^1$-weak equivalence) (\cite[Remark 3.2.5, Lemma 3.2.6, Theorem 2.1.66]{mv}).

\begin{remark} \label{localisation}
For the injective local model structure all spaces are cofibrant. Hence for any space $\mathcal{X}$ and for any $U \in Sm/k$,    
$$Hom_{\mathbf{H}_s(Sm/k)}(U, \mathcal{X}) = \pi_0(Ex(\mathcal{X})(U)).$$ 
Since $Ex_{\mathbb{A}^1}(\mathcal{X})$ is $\mathbb{A}^1$-local, 
$$Hom_{\mathbf{H}(k)}(U, \mathcal{X}) = Hom_{\mathbf{H}_s(Sm/k)}(U, Ex_{\mathbb{A}^1}(\mathcal{X})).$$
Moreover  $Ex_{\mathbb{A}^1}(\mathcal{X})$ is fibrant. Hence,
$$Hom_{\mathbf{H}(k)}(U, \mathcal{X})=
\pi_0(Ex_{\mathbb{A}^1}(\mathcal{X})(U)).$$
\end{remark}

For any space $\mathcal{X}$, let $\pi_0(\mathcal{X})$ be the presheaf defined by 
$$U \in Sm/k \mapsto Hom_{\mathbf{H}_s(Sm/k)}(U, \mathcal{X}).$$

\begin{theorem} \label{a1 surjective theorem}
Let $\mathcal{X}$ be a space. 
For any $X \in Sm/k$, such that $dim(X) \leq 1$, the canonical morphism $$\pi_0(\mathcal{X})(X) \to a_{Nis}(\pi_0(\mathcal{X}))(X)$$
is surjective.
\end{theorem}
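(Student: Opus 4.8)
The plan is to reduce to a fibrant $\mathcal{X}$, to represent a section of $a_{Nis}(\pi_0(\mathcal{X}))$ over $X$ by an elementary distinguished square, and then to combine the Nisnevich Brown--Gersten property with the elementary geometry of smooth schemes of dimension $\le 1$. First I would replace $\mathcal{X}$ by $Ex(\mathcal{X})$; by Remark \ref{localisation} this changes neither $\pi_0(\mathcal{X})$ nor $a_{Nis}(\pi_0(\mathcal{X}))$, so I may assume $\mathcal{X}$ is fibrant for the local model structure. Then $\pi_0(\mathcal{X})(U)=\pi_0(\mathcal{X}(U))$ for all $U$ and $\mathcal{X}$ satisfies the Brown--Gersten property. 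Fix $s\in a_{Nis}(\pi_0(\mathcal{X}))(X)$. Since $\pi_0(\mathcal{X})\to a_{Nis}(\pi_0(\mathcal{X}))$ is a Nisnevich-local epimorphism, $s$ is represented by a Nisnevich cover of $X$ together with honest sections of $\pi_0(\mathcal{X})$ on its members that agree on overlaps in $a_{Nis}(\pi_0(\mathcal{X}))$. Because $\dim X\le 1$, such a cover refines to the cover $\{U\to X,\, V\to X\}$ attached to an elementary distinguished square: over the (finitely many) generic points of $X$ the cover admits sections, which spread out to a dense open $U\subseteq X$; as $\dim X\le 1$ the complement $Z:=X\setminus U$ is a finite set of closed points, and over étale neighbourhoods of these the cover again admits sections, which can be assembled into an étale $p:V\to X$ restricting to an isomorphism over $Z$. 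Pulling back along suitable sections gives $t_U\in\pi_0(\mathcal{X}(U))$ and $t_V\in\pi_0(\mathcal{X}(V))$ whose images in $a_{Nis}(\pi_0(\mathcal{X}))(W)$, where $W:=U\times_X V$, coincide; hence $t_U$ and $t_V$ already agree in $\pi_0(\mathcal{X}(W^0))$ on some dense open $W^0\subseteq W$.

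The crucial step is a modification of $V$. Write $Z=\{z_1,\dots,z_r\}$ and let $w_j\in V$ be the point over $z_j$; each $w_j$ is a closed point. Since $\dim V\le 1$ and $W^0$ is a dense open of $V$, the subset $V^j:=W^0\cup\{w_j\}$ is again open in $V$, because its complement $V\setminus V^j$ is a finite set of closed points. Consequently $V':=\coprod_{j=1}^{r}V^j\to X$ is étale, restricts to an isomorphism over $Z$, and $\{U\to X,\, V'\to X\}$ is the cover of a new elementary distinguished square whose overlap is $W'=U\times_X V'=\coprod_j W^0$, on which $t_U$ and $t_V|_{V'}$ coincide in $\pi_0(\mathcal{X}(W'))$ by construction. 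Applying $\mathcal{X}$ to this elementary distinguished square yields, by the Brown--Gersten property, a homotopy cartesian square of simplicial sets with vertices $\mathcal{X}(X),\mathcal{X}(V'),\mathcal{X}(U),\mathcal{X}(W')$; and $\pi_0$ of any homotopy cartesian square surjects onto the fibre product of the $\pi_0$'s, so the compatible pair $(t_U,t_V|_{V'})$ lifts to some $t\in\pi_0(\mathcal{X}(X))=\pi_0(\mathcal{X})(X)$. Finally, since $a_{Nis}(\pi_0(\mathcal{X}))$ is a sheaf and $\{U\to X,\,V'\to X\}$ is a cover along which $t$ restricts to $t_U$ on $U$ and to $t_V|_{V'}$ on $V'$, the image of $t$ in $a_{Nis}(\pi_0(\mathcal{X}))(X)$ equals $s$, which gives the asserted surjectivity.

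I expect the main obstacle to be precisely the construction of $V'$: one must verify that $\{U\to X,\, V'\to X\}$ is genuinely an elementary distinguished square --- in particular that $p$ is an isomorphism over all of $Z$ and that the new overlap $W'$ lands inside the locus $W^0$ on which $t_U$ and $t_V$ already agree --- while keeping careful track of the finitely many points of $Z$ and the various étale neighbourhoods produced along the way. This is the only place where the hypothesis $\dim X\le 1$ enters in an essential way: for higher-dimensional $X$ a dense open need not be the complement of finitely many closed points, so $V^j$ need not be open and the argument breaks down. By contrast, the reduction to the fibrant case and the final invocation of the Brown--Gersten property are formal.
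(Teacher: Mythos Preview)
Your argument is correct. Both proofs begin by replacing $\mathcal{X}$ with a fibrant model and both ultimately rely on Lemma~\ref{surjective} (surjectivity of $\pi_0$ of a homotopy pullback onto the fibre product of $\pi_0$'s), but the organization is genuinely different.

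The paper proceeds in two stages: first it isolates Lemma~\ref{a1 surjective lemma}, establishing surjectivity over essentially smooth discrete valuation rings via a Nisnevich elementary square around the closed point; then, in the proof of the theorem, it lifts $\alpha$ to $\pi_0(\mathcal{X})(O_{X,p})$ at each closed point $p$, matches these lifts at the generic point, and glues along \emph{Zariski} squares using a Noetherian maximality argument. Your proof instead builds a single \emph{Nisnevich} elementary distinguished square $\{U\hookrightarrow X,\;V'\to X\}$ tailored to $s$, forces honest agreement on the overlap by the shrinking $V\rightsquigarrow V'=\coprod_j(W^0\cup\{w_j\})$, and applies Brown--Gersten once. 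The key step---that sections agreeing in $a_{Nis}(\pi_0(\mathcal{X}))(W)$ already agree in $\pi_0(\mathcal{X})$ over a dense open $W^0$---is justified because the Nisnevich stalk at a generic point coincides with the Zariski stalk; and the openness of $V^j=W^0\cup\{w_j\}$ is exactly where $\dim X\le 1$ is used, just as in the paper. Your route is more direct and avoids both the separate DVR lemma and the maximality induction; the paper's route has the compensating advantage that Lemma~\ref{a1 surjective lemma} is reusable (it is invoked again in Corollary~\ref{a1 surjective} and in the remark following Theorem~\ref{important theorem}).

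Two small points worth making explicit in a write-up: when $X$ is disconnected one should reduce to the connected case (or use that a fibrant $\mathcal{X}$ sends finite disjoint unions to products); and when first assembling $V=\coprod_j V_j$ from Nisnevich neighbourhoods of the $z_j$, one should shrink each $V_j$ so that its fibre over $Z$ is exactly $\{w_j\}$, which is possible since those fibres are finite and $\dim V_j\le 1$.
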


Before giving the proof we note the following consequence.
\begin{corollary} \label{a1 surjective}
For any space $\mathcal{X}$, the canonical morphism 
$$\pi_0^{\mathbb{A}^1}(\mathcal{X})(\mathbb{A}^1_F) \to a_{Nis}(\pi_0^{\mathbb{A}^1}(\mathcal{X}))(\mathbb{A}^1_F)$$ 
is bijective for all finitely generated separable field extensions $F/k$.
\end{corollary}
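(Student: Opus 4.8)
The plan is to combine Theorem \ref{a1 surjective theorem} with the homotopy invariance of $\pi_0^{\mathbb{A}^1}$ already recorded in the introduction. First I would observe that, by Remark \ref{localisation}, $\pi_0^{\mathbb{A}^1}(\mathcal{X})$ is precisely the presheaf $\pi_0(Ex_{\mathbb{A}^1}(\mathcal{X}))$; so replacing $\mathcal{X}$ by its $\mathbb{A}^1$-fibrant replacement, it suffices to prove the statement with $\pi_0^{\mathbb{A}^1}$ replaced by $\pi_0$ of an $\mathbb{A}^1$-fibrant (in particular fibrant) space $\mathcal{Y} := Ex_{\mathbb{A}^1}(\mathcal{X})$, and $a_{Nis}(\pi_0^{\mathbb{A}^1}(\mathcal{X}))$ by $a_{Nis}(\pi_0(\mathcal{Y}))$.

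For surjectivity, apply Theorem \ref{a1 surjective theorem} with $X = \mathbb{A}^1_F$: since $F/k$ is a finitely generated separable field extension, $\dim(\mathbb{A}^1_F) = 1 \le 1$, so the canonical map $\pi_0(\mathcal{Y})(\mathbb{A}^1_F) \to a_{Nis}(\pi_0(\mathcal{Y}))(\mathbb{A}^1_F)$ is surjective, which gives the surjectivity half of the corollary.

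For injectivity, the idea is that two sections of $\pi_0^{\mathbb{A}^1}(\mathcal{X})$ over $\mathbb{A}^1_F$ that agree after sheafification agree Nisnevich-locally, i.e.\ after pulling back along an étale cover; but over the one-dimensional local-type scheme $\mathbb{A}^1_F$ (or its Nisnevich points, which are henselian local rings of dimension $\le 1$) the Nisnevich topology is simple enough — the relevant stalks are henselian local rings of fields of dimension $\le 1$ — that one can reduce the ambiguity to the stalk at the generic point, where the presheaf $\pi_0^{\mathbb{A}^1}(\mathcal{X})$ already coincides with its sheafification because $\pi_0^{\mathbb{A}^1}$ sends filtered colimits of smooth schemes to colimits and the relevant local rings arise as such colimits. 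Concretely, I would factor an equality-after-sheafification through an elementary distinguished square, use that $\pi_0^{\mathbb{A}^1}(\mathcal{X}) = \pi_0(\mathcal{Y})$ with $\mathcal{Y}$ satisfying the Nisnevich Brown-Gersten property, and chase the resulting homotopy cartesian square of simplicial sets on $\pi_0$; the homotopy invariance $\pi_0^{\mathbb{A}^1}(\mathcal{X})(\mathbb{A}^1_F) \cong \pi_0^{\mathbb{A}^1}(\mathcal{X})(F)$ lets me replace $\mathbb{A}^1_F$ by $\mathrm{Spec}\, F$ at the level of the presheaf, and over a field the presheaf and its Nisnevich sheafification agree on sections (a field has no nontrivial Nisnevich covers).

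The main obstacle I anticipate is the injectivity argument: surjectivity is an immediate citation of Theorem \ref{a1 surjective theorem}, but injectivity requires controlling the kernel of presheaf-to-sheaf on $\mathbb{A}^1_F$, which is not formal. The cleanest route is probably to note that $a_{Nis}(\pi_0(\mathcal{Y}))(\mathbb{A}^1_F)$ injects into the product of its stalks at the (finitely many, dimension $\le 1$) Nisnevich points of $\mathbb{A}^1_F$, that each such stalk is computed as a filtered colimit of $\pi_0(\mathcal{Y})$ over smooth schemes (so no sheafification is needed at the stalk), and that the generic stalk alone already detects equality of two elements of $\pi_0^{\mathbb{A}^1}(\mathcal{X})(\mathbb{A}^1_F) = \pi_0^{\mathbb{A}^1}(\mathcal{X})(F)$ because $\pi_0^{\mathbb{A}^1}(\mathcal{X})$, being $\mathbb{A}^1$-invariant and unramified-like on such one-dimensional objects, is already separated for the Nisnevich topology there. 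Making "separated at dimension $\le 1$" precise — presumably via the Brown-Gersten square and a diagram chase showing the presheaf $\pi_0$ of a Brown-Gersten-fibrant space is separated over henselian local rings of dimension $\le 1$ — is where the real work lies.
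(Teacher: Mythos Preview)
Your surjectivity argument is exactly the paper's: replace $\mathcal{X}$ by $\mathcal{Y}=Ex_{\mathbb{A}^1}(\mathcal{X})$ and cite Theorem~\ref{a1 surjective theorem} with $X=\mathbb{A}^1_F$.

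For injectivity, you already wrote down the two facts that suffice --- the homotopy invariance $\pi_0^{\mathbb{A}^1}(\mathcal{X})(\mathbb{A}^1_F)\cong \pi_0^{\mathbb{A}^1}(\mathcal{X})(F)$ and the identification of presheaf and Nisnevich sheafification over a field --- but you then set them aside and declare injectivity to be ``the main obstacle'', proposing an elementary-distinguished-square / Brown--Gersten chase and a stalkwise separatedness argument. That detour is unnecessary, and the separatedness claim you would need (``$\pi_0$ of a Brown--Gersten-fibrant space is separated over henselian local rings of dimension $\le 1$'') is not something you have established and is not obviously true in general. The paper's argument is a one-line diagram chase using precisely the two facts you already noted: restrict along the zero section $s_0:\operatorname{Spec} F\to \mathbb{A}^1_F$ to obtain the commutative square
\[
\xymatrix{
\pi_0^{\mathbb{A}^1}(\mathcal{X})(\mathbb{A}^1_F) \ar[r]^{\;s_0^*}\ar[d] & \pi_0^{\mathbb{A}^1}(\mathcal{X})(F)\ar[d]^{\wr}\\
a_{Nis}(\pi_0^{\mathbb{A}^1}(\mathcal{X}))(\mathbb{A}^1_F) \ar[r]^{\;s_0^*} & a_{Nis}(\pi_0^{\mathbb{A}^1}(\mathcal{X}))(F).
}
\]
The top arrow is a bijection (homotopy invariance of $\pi_0^{\mathbb{A}^1}$) and the right arrow is a bijection (no nontrivial Nisnevich covers of $\operatorname{Spec} F$), so the composite around the top is bijective; commutativity then forces the left vertical map to be injective. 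No Brown--Gersten input, no stalk analysis, no separatedness result is needed. In short: the ingredients in your middle paragraph already finish the proof; the ``real work'' you anticipate in the last paragraph does not exist.
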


\begin{proof}
For any $X \in Sm/k$,
$$\pi_0^{\mathbb{A}^1}(\mathcal{X})(X) = \pi_0(Ex_{\mathbb{A}^1}\mathcal{X})(X).$$
The canonical morphism 
$$ \pi_0^{\mathbb{A}^1}(\mathcal{X})(\mathbb{A}^1_F) \to a_{Nis}(\pi_0^{\mathbb{A}^1}(\mathcal{X}))(\mathbb{A}^1_F) $$
is surjective (applying theorem \ref{a1 surjective theorem} for the space $Ex_{\mathbb{A}^1}(\mathcal{X})$). On the other hand, consider the following commutative diagram
$$\xymatrix{                                   
\pi_0^{\mathbb{A}^1}(\mathcal{X})(\mathbb{A}^1_F) \ar[r] \ar[d] & \pi_0^{\mathbb{A}^1}(\mathcal{X})(F) \ar[d]^{\wr} \\
a_{Nis}(\pi_0^{\mathbb{A}^1}(\mathcal{X}))(\mathbb{A}^1_F) \ar[r]  & a_{Nis}(\pi_0^{\mathbb{A}^1}(\mathcal{X}))(F),}
$$
where the horizontal morphisms are induced by the zero section $F \xrightarrow{s_0} \mathbb{A}^1_F$. The top horizontal morphism and the right vertical morphism are bijective. Hence the left vertical surjective morphism is injective.
\end{proof}

The proof of  theorem \ref{a1 surjective theorem} depends on the relation between homotopy pullback of spaces and pullback of the presheaves of connected components of those spaces. 

Let $I$ be a small category. There is a functor $(I/-) : I \to Cat$ such that for any $i \in I$, $(I/-)(i) = I/i$. Here $Cat$ is the category of small categories and $I/i$ is the over category. There is a functor $N : Cat \to \bigtriangleup^{op}Sets$, such that for any $J \in Cat$, the simplicial set $N(J)$ is the nerve of the category $J$.
Define $N(I/-) := N \circ (I/-)$. 

A set $S$ will be considered as a simplicial set in the obvious way : in every simplicial degree it is given by $S$ and faces and degeneracies are identities. These simplicial sets are called discrete simplicial sets. 

\begin{lemma}
Let $X : I \to \bigtriangleup^{op}Sets$ be a diagram of discrete simplicial sets. Then $lim_I X \cong holim_I X$.  
\end{lemma}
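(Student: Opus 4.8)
The plan is to show that for a diagram $X\colon I\to\bigtriangleup^{op}Sets$ of discrete simplicial sets, the natural map $\lim_I X\to\holim_I X$ is a weak equivalence. Since every object in the diagram is a discrete simplicial set, and discrete simplicial sets are fibrant (every Kan condition is trivially satisfied, all horns having unique fillers), the homotopy limit can be computed directly without first taking a fibrant replacement. The standard formula gives $\holim_I X$ as the equalizer (end) $\int_{i\in I}\operatorname{Map}(N(I/i),X(i))$, and the projection to $\lim_I X$ is induced by the inclusion of $0$-simplices (or equivalently by the map $N(I/i)\to *$).

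First I would recall the explicit simplicial description: in simplicial degree $n$, an $n$-simplex of $\holim_I X$ is a compatible family, over all chains $i_0\to\cdots\to i_n$ in $I$, of $n$-simplices of $X(i_0)$, subject to the usual coherence under the face and degeneracy maps of $I$-chains. Because each $X(i)$ is discrete, an $n$-simplex of $X(i_0)$ is just an element of the set $X(i_0)$, and all the simplicial structure maps on $X(i_0)$ are identities. Unwinding the compatibility conditions, a point of $(\holim_I X)_n$ is then a choice, for each chain of length $n$, of an element of $X(i_0)$ that is locally constant along the chain in the appropriate sense; the face maps forgetting the outer vertices force all these elements to be determined by the single element attached to the object $i_0$, and functoriality forces that assignment to be a cone over the diagram. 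One then checks that the resulting simplicial set is discrete and is canonically identified with $\lim_I X$, with the identification compatible with the projection map. Hence $\lim_I X\cong\holim_I X$, and in particular the map is an isomorphism of simplicial sets, not merely a weak equivalence.

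Alternatively, and perhaps more cleanly, I would argue via a model-categorical bookkeeping: the inclusion of discrete simplicial sets into all simplicial sets is the left adjoint $\pi_0$ restricted, no — better, observe that a discrete simplicial set is $0$-coskeletal, and the full subcategory of discrete simplicial sets is equivalent to $Sets$; since $Sets$ has all small limits and the inclusion $Sets\hookrightarrow\bigtriangleup^{op}Sets$ (constant simplicial objects) preserves limits, $\lim_I X$ computed in simplicial sets is again discrete. It remains only to see that for a diagram of fibrant objects whose limit is already "homotopically correct" the canonical comparison $\lim\to\holim$ is a weak equivalence; here one can invoke that $\holim$ of a diagram of fibrant objects is weakly equivalent to $\holim$ of any objectwise-fibrant replacement, which is the diagram itself, and then use that all the mapping spaces $\operatorname{Map}(N(I/i),X(i))$ are again discrete (a mapping space out of anything into a discrete set is discrete), so the end defining $\holim$ is a limit of discrete sets, hence discrete, and the evaluation-at-a-vertex maps $N(I/i)\to\ast$ induce the comparison with $\lim_I X$, which one checks directly to be a bijection in each degree.

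The main obstacle is purely organizational rather than deep: one must be careful that the homotopy limit is genuinely the derived functor, i.e.\ that no fibrant replacement is silently needed, and this is exactly where the discreteness (hence fibrancy) of each $X(i)$ is used. The only real content is the combinatorial identification of the end $\int_{i}\operatorname{Map}(N(I/i),X(i))$ with $\lim_I X$, which amounts to the observation that mapping into a discrete set only sees $\pi_0$ of the source, and $\pi_0(N(I/i))=\ast$ since $I/i$ has a terminal object $\mathrm{id}_i$ and is therefore connected. I expect the write-up to be short, with the terminal-object observation doing all the work.
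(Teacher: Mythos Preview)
Your proposal is correct and its cleanest formulation is essentially the paper's argument: both reduce to the adjunction between $\pi_0$ and the inclusion of discrete simplicial sets, together with the observation that each $N(I/i)$ is connected (since $I/i$ has a terminal object), so that mapping $N(I/-)$ into the discrete diagram $X$ collapses to mapping the point diagram $\bullet_I$ into $X$, i.e.\ to $\lim_I X$. The paper packages this via the Bousfield--Kan adjointness $Hom(\bigtriangleup^n\times N(I/-),X)\cong Hom(\bigtriangleup^n,\operatorname{holim}_I X)$ rather than the end formula, but the content is identical.
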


\begin{proof}
By adjointness (\cite[Ch. XI 3.3]{bk}) $$Hom(\bigtriangleup^n \times N(I/-), X) = Hom(\bigtriangleup^n, holim_I X).$$ The fucntor $\pi_0 : (\bigtriangleup^{op}Sets)^I \to (Sets)^I$
is left adjoint to the fucntor\linebreak $N : (Sets)^I \to (\bigtriangleup^{op}Sets)^I$, where $N$ maps a diagram of sets to the same diagram of discrete simplicial sets.
Hence $Hom(\bigtriangleup^n \times N(I/-), X) = Hom(\bullet_I , X)$, where $\bullet_I$ is the diagram of sets given by the one element set for each $i \in I$. 
But $Hom(\bullet_I , X) = Hom(\bullet, lim_I X)$, by adjointness. Therefore, we get our result.
\end{proof}

\begin{remark}
Let $X : I \to \bigtriangleup^{op}Sets$ be a diagram such that each $X(i)$ is fibrant for all $i \in I$. The canonical morphism $X(i) \to \pi_0(X(i))$ induces a morphism \linebreak $holim_I(X) \to lim_I \pi_0(X)$. This gives the following morphism 
\begin{equation} \label{homotopy limit map}
\pi_0(holim_I(X)) \to lim_I \pi_0(X).
\end{equation}
\end{remark}

\begin{lemma} \label{surjective}
 Suppose that $I$ is the pullback category $1 \rightarrow 0 \leftarrow 2$ and let \linebreak$D :I \to \bigtriangleup^{op}Sets$ be a digram $X \xrightarrow{p} Y \xleftarrow{q} Z$ 
such that $X,Y,Z$ are fibrant. Then the map \eqref{homotopy limit map} is surjective.
\end{lemma}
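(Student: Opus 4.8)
The plan is to describe the homotopy pullback $\holim_I D$ concretely and then trace where an element of $\lim_I \pi_0(D)$ comes from. Recall that for the cospan $X \xrightarrow{p} Y \xleftarrow{q} Z$ of fibrant simplicial sets the homotopy pullback can be modeled (up to weak equivalence) as the simplicial set $P$ whose $n$-simplices are triples $(x, \omega, z)$ with $x \in X_n$, $z \in Z_n$, and $\omega$ a simplicial path in $Y$ (a $1$-simplex of the path space $Y^{\Delta^1}$ in degree $n$) from $p(x)$ to $q(z)$; equivalently one forms the pullback of $X \to Y^{\Delta^1} \leftarrow Z$ along the two endpoint evaluations, and since $Y^{\Delta^1} \to Y \times Y$ is a fibration between fibrant objects the strict pullback computes the homotopy pullback. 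The projections $P \to X$ and $P \to Z$ together with $P \to Y^{\Delta^1} \to Y$ realize $P$ as a space mapping to the cospan.

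First I would fix an element $(\bar x, \bar z) \in \lim_I \pi_0(D)$, i.e. classes $\bar x \in \pi_0(X)$ and $\bar z \in \pi_0(Z)$ with $p_*(\bar x) = q_*(\bar z)$ in $\pi_0(Y)$. Choose $0$-simplices $x \in X_0$ and $z \in Z_0$ representing $\bar x$ and $\bar z$. Since $p(x)$ and $q(z)$ lie in the same path component of $Y$, and $Y$ is fibrant (hence a Kan complex), there is an actual $1$-simplex $\omega \in Y_1$ with $d_1\omega = p(x)$ and $d_0\omega = q(z)$. Then $(x, \omega, z)$ is a $0$-simplex of the model $P$ for $\holim_I D$, and by construction its image under $P \to X$ is $x$ and under $P \to Z$ is $z$, so its class in $\pi_0(\holim_I D)$ maps to $(\bar x, \bar z)$ under \eqref{homotopy limit map}. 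This exhibits the required preimage, proving surjectivity.

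The two points that need care, and which I would spell out, are: (i) that the concrete model $P$ defined via the strict pullback along the path space genuinely represents $\holim_I D$ in the homotopy category — this is the standard fact that $Y^{\Delta^1} \xrightarrow{(d_1,d_0)} Y\times Y$ is a fibration (as $Y$ is fibrant and $\partial\Delta^1 \hookrightarrow \Delta^1$ is a cofibration), so replacing one leg of the cospan by this fibration and taking the ordinary pullback computes the homotopy pullback, compatibly with the maps to $X$, $Y$, $Z$; and (ii) the Kan condition on $Y$ is exactly what lets us lift the homotopy-theoretic equality $p_*\bar x = q_*\bar z$ to an honest edge $\omega$ — here one invokes that $\pi_0(Y) = Y_0/\!\sim$ where $\sim$ is "connected by a $1$-simplex", valid for $Y$ a Kan complex. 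Neither step is deep; the main (very mild) obstacle is just being careful that the chosen model of the homotopy limit is the one through which the map \eqref{homotopy limit map} is defined, so that the preimage we construct is literally a preimage for that map and not merely for some abstractly equivalent one. Everything else is a direct unwinding of definitions.
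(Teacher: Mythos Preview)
Your argument is correct and follows the same strategy as the paper's own proof: choose an explicit model for $holim_I D$ and exhibit a $0$-simplex mapping to a given element of $\pi_0(X)\times_{\pi_0(Y)}\pi_0(Z)$. The only difference is the model chosen --- the paper factors $p$ as a trivial cofibration followed by a fibration $p':X'\to Y$ (so $holim_I D \simeq X'\times_Y Z$) and then lifts the edge in $Y$ along $p'$ to replace $x$ by some $x'$ with $p'(x')=q(z)$, whereas you use the path-space model $X\times_Y Y^{\Delta^1}\times_Y Z$, in which the connecting edge is already part of the data and no lifting step is needed.
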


\begin{proof}
By \cite[Ch. XI 4.1.(iv), 5.6]{bk}  $holim_I(X) \cong X' \times_Y Z$, where $X \to X' \xrightarrow{p'} Y$ is a factorisation
of $p$ into a trivial cofibration followed by a fibration $p'$.  Since $\pi_0(X) \cong \pi_0(X')$, it is enough to show that 
$$\pi_0 (X' \times_Y Z) \to \pi_0(X') \times_{\pi_0(Y)} \pi_0(Z)$$ is surjective. So we can assume that $p$ is a fibration.
Let $s \in \pi_0(X) \times_{\pi_0(Y)} \pi_0(Z)$. $s$ can be represented (not uniquely) by
$(x,y, z)$, where $(x,z) \in X_0 \times Z_0$ and $y \in Y_1$ such that $d_0(y) = p(x)$ and $d_1(y) = q(z)$. Since $p$ is a fibration, we can lift the path $y$ to a path $y' \in X_1$
such that $d_0(y') = x$ and $x':= d_1(y')$ maps to $q(z)$.  $holim_I D \cong X \times_Y Z$. Therefore $(x',z) \in holim_I D$ which maps to $s$. This proves the surjectivity.
\end{proof}

\begin{remark}



Under the condition of lemma \ref{surjective}, the map \eqref{homotopy limit map} may not be injective. Indeed, if $Y$ is connected, $X$ is the universal cover of $Y$ and $Z = \bullet$,  then \eqref{homotopy limit map} is injective if and only if $Y$ is simply connected.


\end{remark}



A noetherian $k$-scheme $X$, which is the inverse limit of a left filtering system $(X_{\alpha})_{\alpha}$ with each transition morphism $X_{\beta} \to X_{\alpha}$ being an \'etale affine morphism between smooth $k$-schemes, is called an essentially smooth $k$-scheme. For any $X \in Sm/k$ and any $x \in X$, the local schemes 
$Spec(O_{X,x})$ and $Spec(O^h_{X,x})$ are essentially smooth $k$-schemes.

\begin{lemma} \label{a1 surjective lemma}
Let $\mathcal{X}$ be a space. For any essentially smooth discrete valuation ring $R$, the canonical morphism 
$$\pi_0(\mathcal{X})(R) \to a_{Nis}(\pi_0(\mathcal{X}))(R)$$ is surjective.
\end{lemma}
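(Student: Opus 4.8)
The plan is to reduce the statement over an essentially smooth DVR $R$ to the Henselian case handled by the Nisnevich Brown--Gersten property, and then to use the explicit surjectivity input from Lemma \ref{surjective}. First I would replace $\mathcal{X}$ by $Ex(\mathcal{X})$, which is fibrant for the Nisnevich local model structure, so that $\pi_0(\mathcal{X})(U) = \pi_0(Ex(\mathcal{X})(U))$ for every $U$ and so that $Ex(\mathcal{X})$ satisfies the Nisnevich Brown--Gersten property. Write $R$ as a filtered limit of smooth $k$-algebras $R_\alpha$ along étale affine transition maps; then $Ex(\mathcal{X})(R) = \operatorname{colim}_\alpha Ex(\mathcal{X})(R_\alpha)$ since $Ex(\mathcal{X})$ is locally of finite presentation up to weak equivalence (continuity of the model structure), and likewise $a_{Nis}(\pi_0(\mathcal{X}))(R) = \operatorname{colim}_\alpha a_{Nis}(\pi_0(\mathcal{X}))(R_\alpha)$. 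Because each $R_\alpha$ may be taken of dimension $\le 1$ (the DVR has dimension $1$, and we can shrink to one-dimensional affine pieces), Theorem \ref{a1 surjective theorem} applies to each $R_\alpha$, and surjectivity passes to the colimit.

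Alternatively — and this is the cleaner route — I would argue directly. An element of $a_{Nis}(\pi_0(\mathcal{X}))(R)$ is, by the description of sheafification, represented by a Nisnevich cover $\{R \to R_i\}$ together with sections $s_i \in \pi_0(\mathcal{X})(R_i)$ that agree on overlaps in $a_{Nis}(\pi_0(\mathcal{X}))$. Since $R$ is a DVR, a Nisnevich cover is split by an étale neighbourhood, and the local ring being Henselian (after passing to $O^h$, but here we want $R$ itself) means one should instead observe: $\operatorname{Spec} R$ has a unique closed point, so any Nisnevich cover $\{U_i \to \operatorname{Spec} R\}$ admits a refinement by an elementary distinguished square decomposition of $\operatorname{Spec} R$ into its generic point and a Henselian-type neighbourhood of the closed point. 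Concretely, choosing $i \in \{1,\dots,n\}$ with the closed point of $R$ in the image of $U_i$, and letting $V = \coprod U_j$, one obtains an elementary distinguished square with $X = \operatorname{Spec} R$; applying the Brown--Gersten property for $Ex(\mathcal{X})$ gives a homotopy cartesian square of simplicial sets, and Lemma \ref{surjective} (applied with $I$ the pullback category, to the cospan $Ex(\mathcal{X})(U) \to Ex(\mathcal{X})(W) \leftarrow Ex(\mathcal{X})(V)$) yields that $\pi_0$ of the homotopy pullback surjects onto the fibre product of $\pi_0$'s. Chasing the given data $(s_i)$ through this fibre product produces an element of $\pi_0(Ex(\mathcal{X})(R)) = \pi_0(\mathcal{X})(R)$ mapping to it.

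The main obstacle I anticipate is the passage from an arbitrary Nisnevich cover of $\operatorname{Spec} R$ to a single elementary distinguished square: in general one needs an induction on the number of sheets of the cover, using that $R$ is local of dimension $\le 1$ so that each stage of the refinement is again an elementary distinguished square with all corners either essentially smooth DVRs, fields, or disjoint unions thereof (so that Theorem \ref{a1 surjective theorem}, whose hypothesis is exactly $\dim \le 1$, keeps applying at each corner, and the compatibility in $a_{Nis}(\pi_0(\mathcal{X}))$ can be upgraded to honest compatibility of representatives at the cost of further refinement). Keeping careful track of the gluing data along $W$ — which is where Lemma \ref{surjective}'s \emph{surjectivity but not injectivity} shows up — is the delicate bookkeeping step; everything else is formal manipulation of the continuity of $Ex$ and of sheafification along filtered limits of schemes.
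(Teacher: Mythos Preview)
Your proposal has a genuine circularity: in the paper's logical order, Theorem~\ref{a1 surjective theorem} is \emph{deduced from} Lemma~\ref{a1 surjective lemma} (the proof of the theorem opens by invoking surjectivity of $\pi_0(\mathcal{X})(O_{X,p}) \to a_{Nis}(\pi_0(\mathcal{X}))(O_{X,p})$ for codimension-one points $p$, which is precisely this lemma for the DVR $O_{X,p}$). Both of your approaches appeal to Theorem~\ref{a1 surjective theorem} --- the first directly on the approximating $R_\alpha$'s, the second in the inductive ``keeps applying at each corner'' step --- so neither can stand as a proof of the lemma.

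Beyond the circularity, you are making the argument harder than it is. The paper exploits the fact that a DVR $\operatorname{Spec} R$ has exactly two points, so one can write down a \emph{single} elementary distinguished square, with no induction on covers: the open piece is $U=\operatorname{Spec} F$ for $F=\operatorname{Frac}(R)$, and the \'etale piece $V=W$ is any Nisnevich neighbourhood of the closed point on which the given section $s$ lifts to $\pi_0(\mathcal{X})$ (such $W$ exists because the stalk of $a_{Nis}(\pi_0(\mathcal{X}))$ at the closed point is computed over $R^h$ as a colimit over such neighbourhoods). The crucial non-circular input you are missing is that $F$ and $L=\operatorname{Frac}(W)$ are \emph{fields}, hence Nisnevich points, so $\pi_0(\mathcal{X})(F)\to a_{Nis}(\pi_0(\mathcal{X}))(F)$ and likewise over $L$ are automatically bijective; this lets you lift $s|_F$ to $\pi_0(\mathcal{X})(F)$ and match it with $s'|_L$ honestly, after which one application of Brown--Gersten plus Lemma~\ref{surjective} finishes. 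No refinement of covers, no bookkeeping, and no forward reference to Theorem~\ref{a1 surjective theorem} is needed.
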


\begin{proof}
By remark \ref{localisation} we can assume that $\mathcal{X}$ is fibrant.

Let $F = Frac(R)$ and let $R^h$ be the henselisation of $R$ at its maximal ideal. Suppose $s \in a_{Nis}(\pi_0(\mathcal{X}))(R)$. Then for the image of $s$ in 
$a_{Nis}(\pi_0(\mathcal{X}))(R^h)$, there exists a Nisnevich neighbourhood of the closed point $p : W \to Spec(R)$ and \linebreak$s' \in \pi_0(\mathcal{X})(W)$, such that $s'$ gets mapped to $s|_W \in  a_{Nis}(\pi_0(\mathcal{X}))(W)$.  Let \linebreak$L = Frac(W)$. For any finitely generated separable field extension $F/k$, the map 
$\pi_0(\mathcal{X})(F) \to a_{Nis}(\pi_0(\mathcal{X}))(F)$ is bijective.
Hence, $s'|_{L}$ is same as $s|_L$. We get two sections $s' \in
 \pi_0(\mathcal{X})(W)$ and $s|_F \in \pi_0(\mathcal{X})(F)$, such that $s'|_L = s|_L$. By lemma \ref{surjective} and the fact that $\mathcal{X}$ satisfies the Nisnevich Brown-Gersten property, we find an element $s_v \in \pi_0(\mathcal{X})(R)$ which gets mapped to $s$. Therefore, $\pi_0(\mathcal{X})(R) \to a_{Nis}(\pi_0(\mathcal{X}))(R)$ is surjective. 
\end{proof}

\begin{proof} [Proof of theorem \ref{a1 surjective theorem}]
Let $X \in Sm/k$ and $dim(X) =1$. Let $\alpha$ be an element of\linebreak 
$a_{Nis}(\pi_0(\mathcal{X}))(X)$. This $\alpha$ gives $\alpha_p \in a_{Nis}(\pi_0(\mathcal{X}))(O_{X,p})$ for every codimension $1$ point $p \in X$, such that 
$\alpha_p|_{K(X)} = \alpha_q|_{K(X)}$, for all $p,q \in X^{(1)}$. By the surjectivity of
  $$\pi_0(\mathcal{X})(O_{X,p}) \to a_{Nis}(\pi_0(\mathcal{X}))(O_{X,p})$$ and bijectivity
of  $$\pi_0(\mathcal{X})(K(X)) \to a_{Nis}(\pi_0(\mathcal{X}))(K(X)),$$ we get elements
$\alpha'_p \in  \pi_0(\mathcal{X})(O_{X,p})$ mapping to $\alpha_p$, such that
$\alpha'_p|_{K(X)} = \alpha'_q|_{K(X)}$ for $p,q \in X^{(1)}$. 

Fix a $p \in X^{(1)}$. There exists an open set $U$ and $\beta \in \pi_0(\mathcal{X})(U)$, such that $\beta|_{O_{X,p}} = \alpha'_p$. Let $\beta' \in a_{Nis}(\pi_0(\mathcal{X})(U))$ be the image of $\beta$. Suppose that $\beta' \neq \alpha|_U$, but $\beta'|_{O_{X,p}} = \alpha_p$. Hence there exsists $U' \subset U$, such that $\beta'|_{U'} =
\alpha|_{U'}$.

So we can assume by Noetherian property of $X$ that there exists a maximal open set
$U \subset X$ and $\alpha' \in  \pi_0(\mathcal{X})(U)$, such that $\alpha'$ gets mapped to $\alpha|_U$. If $U \neq X$, then there exists a codimension one point $q \in X \setminus U$. We can get an open neighborhood $U_q$ and an element $\alpha'' \in \pi_0(\mathcal{X})(U_q)$, such that $\alpha''$ gets mapped to $\alpha|_{U_q}$. But by construction of these $\alpha'', \alpha'$ we know that $\alpha''|_{K(X)} = \alpha'|_{K(X)}$. Hence there exists an open set $U' \subset U_q \cap U$, such that $\alpha''|_{U'} = \alpha'|_{U'}$. Let $Z = U_q \cap U \setminus U'$. Since $dim(X) = 1$, the set 
$Z$ is finite collection of closed points. Therefore, $Z$ is closed in $U$. Let $U'' = U \setminus Z$ be the open subset of $U$. Note that $U'' \cap U_q = U'$. Denote $U'' \cup U_q = U \cup U_q $ by $V$.

Let $\alpha'|_{U''} \in \pi_0(\mathcal{X})(U'')$ be the restriction of $\alpha'$ to $U''$. Hence, $\alpha'|_{U''}$ gets mapped to $\alpha|_{U''}$ and $\alpha'|_{U''}$ restricted to $U'$ is same as $\alpha''$ restricted to $U'$. As $\mathcal{X}$ is Nisnevich fibrant, it satisfies 
the Zariski Brown-Gersten property. By lemma \ref{surjective}, we get a section $s_V \in \pi_0(\mathcal{X})(V)$ which gets mapped to $s|_{V}$. This gives a contradiction to the maximality of $U$. This finishes the proof of the theorem.
\end{proof}

\section{$H$-groups and homogeneous spaces}
In this section we prove $\mathbb{A}^1$-invariance of $a_{Nis}(\pi_0^{\mathbb{A}^1})$ for $H$-groups and\linebreak homogeneous spaces for
$H$-groups. 

\begin{definition} \label{main definition}
Let $\mathcal{X}$ be a pointed space, i.e., $\mathcal{X}$ is a space endowed with a morphism $x : Spec(k) \to \mathcal{X}$. It is called an $H$-space if there exists a base point preserving morphism
$\mu : (\mathcal{X} \times \mathcal{X}) \to \mathcal{X}$, such that
$\mu \circ (x \times id_{\mathcal{X}})$ and $\mu \circ (id_{\mathcal{X}} \times x )$ are equal to $id_{\mathcal{X}}$ in $\mathbf{H}(k)$.  Here $\mathcal{X} \times \mathcal{X}$ is pointed by $(x,x)$.
It is called an $H$-group if :
\begin{enumerate}
 \item $\mu \circ (\mu \times id_{\mathcal{X}})$ is equal to $\mu \circ (id_{\mathcal{X}} \times \mu)$ in $\mathbf{H}(k)$ modulo the canonical isomorphism $\alpha : \mathcal{X} \times (\mathcal{X} \times \mathcal{X})
\to (\mathcal{X} \times \mathcal{X}) \times \mathcal{X}.$ 

\item There exists a morphism $(.)^* : \mathcal{X} \to \mathcal{X}$, such that $\mu \circ (id_{\mathcal{X}}, (.)^*)$ and $\mu \circ ((.)^* , id_{\mathcal{X}})$ are equal to the constant map $c : \mathcal{X} \to \mathcal{X}$ in $\mathbf{H}(k)$. Here the image of the constant map $c$ is $x$.
\end{enumerate}
\end{definition}

\begin{remark} \label{associative}
Recall from \cite[§3.2.1]{mv} that 
$$Ex_{\mathbb{A}^1} = Ex^{\mathcal{G}} \circ (Ex^{\mathcal{G}} \circ Sing_*^{\mathbb{A}^1})^{\mathbb{N}} \circ Ex^{\mathcal{G}}.$$
The fucntors $Ex^{\mathcal{G}}$ and $Sing_*^{\mathbb{A}^1}$ commutes with finite limits by \cite[§2.3.2, Theorem 2.1.66]{mv}. Also filtered colimit commutes with finite products. Therefore, $Ex_{\mathbb{A}^1}$ commutes with finite products. If $\mathcal{X}$ is an $H$-group as described in \ref{main definition}, then 
the morphisms $Ex_{\mathbb{A}^1}(x)$, $Ex_{\mathbb{A}^1}(\mu)$ and $Ex_{\mathbb{A}^1}((.)^*)$ satisfy the conditions of the definition \ref{main definition}.
Hence, $Ex_{\mathbb{A}^1}(\mathcal{X})$ is also an $H$-group.

Suppose that $a,b,c \in \pi_0(Ex_{\mathbb{A}^1}(\mathcal{X}))(U)$
for some $U \in Sm/k$. Let $f,g : \mathcal{Y} \to \mathcal{Z}$ be morphisms between $\mathbb{A}^1$-fibrant spaces such that $f$ is equal to $g$ in $\mathbf{H}(k)$, then $f$ and $g$ are simplicially homotopic. Using this, we get $\mu(a, \mu(b,c)) = \mu(\mu(a,b),c)$, $\mu(a, x) = a = \mu(x,a)$ and $\mu(a,a^*) = \mu(a^*,a) = x$. 
Hence, $\pi_0(Ex_{\mathbb{A}^1}(\mathcal{X}))$ is a presheaf of groups.
\end{remark}

Let $\mathcal{X}$ be an $H$-group. Let $\mathcal{Y}$ be a space. 
\begin{definition} \label{action definition}
The space $\mathcal{Y}$ is called an $\mathcal{X}$-space if 
there exists a morphism\linebreak
$a : \mathcal{X} \times \mathcal{Y} \to \mathcal{Y}$,
such that the following diagram
 $$\xymatrix{
\mathcal{X} \times (\mathcal{X} \times \mathcal{Y})  \ar[r]^-{id_{\mathcal{X}} \times a} \ar[d]^{a_{\mathcal{X}} \times id_{\mathcal{Y}}} & \mathcal{X} \times \mathcal{Y} \ar[d]^a \\
\mathcal{X} \times \mathcal{Y} \ar[r]^a  & \mathcal{Y}}$$
commutes in $\mathbf{H}(k)$.
\end{definition}

\begin{definition} \label{homogeneous definition}
Let $\mathcal{X}$ be an $H$-group and let $\mathcal{Y}$ be an $\mathcal{X}$-space.  $\mathcal{Y}$ is called a homogeneous $\mathcal{X}$-space if for any essentially smooth henselian $R$, the presheaf of groups $\pi_0^{\mathbb{A}^1}(\mathcal{X})(R)$ acts transitively on $\pi_0^{\mathbb{A}^1}(\mathcal{Y})(R)$.
\end{definition}

\begin{remark} \label{trivial}
$\mathcal{Y}$ is a homogeneous $\mathcal{X}$-space if and only if the colimit of
the diagram $\pi_0^{\mathbb{A}^1}(\mathcal{Y}) \xleftarrow{pr} \pi_0^{\mathbb{A}^1}(\mathcal{X}) \times \pi_0^{\mathbb{A}^1}(\mathcal{Y}) \xrightarrow{a} \pi_0^{\mathbb{A}^1}(\mathcal{Y})$ is Nisnevich locally trivial.
\end{remark}

\begin{lemma} \label{commute}
If  
$$\xymatrix{
B  \ar[r]\ar[d] &C\ar[d] \\
A \ar[r]  & D}$$
is a homotopy cocartesian square of spaces then, after applying $a_{Nis}(\pi_0)$, one gets a cocartesian square of sheaves.
\end{lemma}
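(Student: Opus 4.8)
The plan is to reduce the statement to a universal-property check, exploiting that $a_{Nis}(\pi_0(-))$ is a composite of left adjoints up to the homotopy-theoretic input. First I would recall that for a homotopy cocartesian square of spaces the map from the honest pushout (formed after replacing one leg by a cofibration, which changes nothing up to weak equivalence) to the homotopy pushout is a local weak equivalence, so I may assume the square is a strict pushout $D = A \cup_B C$ with, say, $B \to A$ a cofibration (sectionwise injection). Next I would use Remark \ref{localisation}: for any space $\mathcal Z$ and any $U \in Sm/k$ we have $\pi_0(\mathcal Z)(U) = \pi_0(Ex(\mathcal Z)(U))$, and $Ex$ sends a strict pushout along a cofibration to a homotopy pushout; combined with the fact that for simplicial sets $\pi_0$ commutes with all colimits (it is a left adjoint), one gets that the presheaf $\pi_0$ applied to the square yields, sectionwise, the pushout diagram of sets $\pi_0(A(U)) \cup_{\pi_0(B(U))} \pi_0(C(U))$ — i.e. the square of presheaves obtained by applying $\pi_0(-)$ is a pushout of presheaves of sets.

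Having reduced to: applying the presheaf-level $\pi_0$ gives a cocartesian square of presheaves, I would then simply observe that $a_{Nis} : PSh(Sm/k) \to Sh_{Nis}(Sm/k)$ is a left adjoint to the inclusion of sheaves, hence preserves all colimits, in particular pushouts. Therefore $a_{Nis}(\pi_0(-))$ applied to the original homotopy cocartesian square produces a cocartesian square of Nisnevich sheaves of sets. That completes the argument.

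The step I expect to require the most care is the middle one: justifying that the presheaf $U \mapsto \pi_0(Ex(\mathcal Z)(U))$ turns a homotopy cocartesian square of spaces into a sectionwise pushout of sets. The subtlety is that $Ex$ is a fibrant replacement functor and need not literally preserve pushouts on the nose; what one uses is that a homotopy pushout of simplicial sets has the expected $\pi_0$ (namely $\pi_0$ of a homotopy pushout is the pushout of the $\pi_0$'s, because $\pi_0$ is a left Quillen functor to sets with the trivial model structure, equivalently because it commutes with colimits and sends weak equivalences to isomorphisms while homotopy pushouts are computed by pushouts along cofibrations). One must also check that evaluating at $U \in Sm/k$ commutes with forming the homotopy pushout of spaces; this holds because the injective local model structure is defined sectionwise on cofibrations and weak equivalences can be detected on stalks, but the cleanest route is to note that a homotopy cocartesian square of spaces is, sectionwise, a homotopy cocartesian square of simplicial sets after fibrant replacement, which is exactly what Remark \ref{localisation} feeds us. Once this is pinned down, the outer two steps (strictification of the homotopy pushout, and exactness of $a_{Nis}$) are formal.
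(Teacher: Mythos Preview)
Your strategy has the right outer shell (strictify the homotopy pushout by a cofibration replacement, then use that $a_{Nis}$ is a left adjoint), but the middle step has a genuine gap. You want to show that the \emph{presheaf} $U\mapsto \pi_0(Ex(\mathcal Z)(U))$ turns the square into a sectionwise pushout of sets, and you argue this by saying that $Ex$ sends the strict pushout along a cofibration to a homotopy pushout, and that ``a homotopy cocartesian square of spaces is, sectionwise, a homotopy cocartesian square of simplicial sets after fibrant replacement.'' This last claim is false: in the injective local model structure the homotopy pushout is indeed computed sectionwise \emph{before} fibrant replacement, but applying $Ex$ only produces a \emph{local} weak equivalence to the homotopy pushout of the $Ex$-diagram, not a sectionwise one. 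Consequently $\pi_0(Ex(D)(U))$ need not equal the pushout $\pi_0(Ex(A)(U))\cup_{\pi_0(Ex(B)(U))}\pi_0(Ex(C)(U))$ for a given $U$; the equality only holds after Nisnevich sheafification. Remark \ref{localisation} does not help here: it identifies $\pi_0(\mathcal Z)(U)$ with $\pi_0(Ex(\mathcal Z)(U))$ but says nothing about compatibility of $Ex$ with pushouts.

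The paper avoids this detour entirely by working with the \emph{naive} sectionwise $\pi_0$ (coequaliser of $d_0,d_1$) rather than the derived one. The point is that this naive $\pi_0:\bigtriangleup^{op}PSh(Sm/k)\to PSh(Sm/k)$ is left adjoint to the constant--simplicial--object functor $\iota$, so the composite $a_{Nis}\circ\pi_0$ is a composite of left adjoints and literally preserves colimits. One then only needs that $a_{Nis}\circ\pi_0$ sends the trivial fibration $A'\to A$ (from the cofibration replacement) to an isomorphism, which is immediate since a local weak equivalence induces an isomorphism on Nisnevich stalks of $\pi_0$. This also shows $a_{Nis}(\pi_0^{\mathrm{naive}}(\mathcal Z))\cong a_{Nis}(\pi_0(Ex(\mathcal Z)))$, so the two notions of $\pi_0$ agree \emph{after} sheafification and the conclusion is about the sheaf you actually care about. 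Rewriting your argument with naive $\pi_0$ in place of $\pi_0\circ Ex$ removes the gap and yields exactly the paper's proof.
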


\begin{proof}
Let $S \in PSh(Sm/k)$ and let $\iota(S)$ be the simplicial presheaf such that in every 
simplicial degree $k$, $\iota(S)_k = S$. The face and degeneracy morphisms are identity morphisms. This gives a functor  $\iota : PSh(Sm/k) \to \bigtriangleup^{op}PSh(T)$ which is right adjoint to $\pi_0$.
Hence $a_{Nis}(\pi_0)$ also has a right adjoint $\iota : Sh(Sm/k)
\to  \bigtriangleup^{op}Sh(Sm/k)$. This implies, $a_{Nis}(\pi_0)$ commutes with colimits. Let $B \xrightarrow{f} A' \xrightarrow{g} A$ be a factorisation of $B \to A$, such that $f$ is a cofibration and $g$ is a trivial fibration. Homotopy colimit of the digram $A \xleftarrow{} B \rightarrow{} C$ is weakly equivalent to the colimit of $A' \xleftarrow{} B \rightarrow{} C$. As $a_{Nis}(\pi_0)$ commutes with colimits and $a_{Nis}(\pi_0(A)) \cong a_{Nis}(\pi_0(A'))$, we get our result.
\end{proof}

\begin{corollary} \label{transitive}
Let $\mathcal{Y}$ be an $\mathcal{X}$-space. $\mathcal{Y}$ is a homogeneous $\mathcal{X}$-space if and only if the homotopy pushout of $Ex_{\mathbb{A}^1}(\mathcal{Y}) \xleftarrow{pr} Ex_{\mathbb{A}^1}(\mathcal{X}) \times Ex_{\mathbb{A}^1}(\mathcal{Y}) \xrightarrow{a} Ex_{\mathbb{A}^1}(\mathcal{Y})$ is connected.
\end{corollary}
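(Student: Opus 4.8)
\textbf{Proof proposal for Corollary \ref{transitive}.}

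The plan is to reduce the statement to Remark \ref{trivial} and Lemma \ref{commute}, exploiting the fact that $\pi_0$ of an $\mathbb{A}^1$-fibrant space computes $\pi_0^{\mathbb{A}^1}$. First I would recall from Remark \ref{localisation} that for any space $\mathcal{Z}$ one has $\pi_0^{\mathbb{A}^1}(\mathcal{Z})(U) = \pi_0(Ex_{\mathbb{A}^1}(\mathcal{Z})(U))$ for all $U \in Sm/k$, so that $\pi_0^{\mathbb{A}^1}(\mathcal{Z}) = \pi_0(Ex_{\mathbb{A}^1}(\mathcal{Z}))$ as presheaves. Moreover, by Remark \ref{associative}, $Ex_{\mathbb{A}^1}$ commutes with finite products, so $Ex_{\mathbb{A}^1}(\mathcal{X} \times \mathcal{Y}) \cong Ex_{\mathbb{A}^1}(\mathcal{X}) \times Ex_{\mathbb{A}^1}(\mathcal{Y})$, and the structure morphisms $pr$ and $a$ of the action diagram are carried to the corresponding morphisms $pr$ and $a$ for the $Ex_{\mathbb{A}^1}$-fibrant replacements (up to the canonical identification). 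Thus the presheaf-level diagram $\pi_0^{\mathbb{A}^1}(\mathcal{Y}) \xleftarrow{pr} \pi_0^{\mathbb{A}^1}(\mathcal{X}) \times \pi_0^{\mathbb{A}^1}(\mathcal{Y}) \xrightarrow{a} \pi_0^{\mathbb{A}^1}(\mathcal{Y})$ is identified with $\pi_0\bigl(Ex_{\mathbb{A}^1}(\mathcal{Y})\bigr) \xleftarrow{pr} \pi_0\bigl(Ex_{\mathbb{A}^1}(\mathcal{X}) \times Ex_{\mathbb{A}^1}(\mathcal{Y})\bigr) \xrightarrow{a} \pi_0\bigl(Ex_{\mathbb{A}^1}(\mathcal{Y})\bigr)$.

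Next I would form the homotopy pushout $\mathcal{P}$ of $Ex_{\mathbb{A}^1}(\mathcal{Y}) \xleftarrow{pr} Ex_{\mathbb{A}^1}(\mathcal{X}) \times Ex_{\mathbb{A}^1}(\mathcal{Y}) \xrightarrow{a} Ex_{\mathbb{A}^1}(\mathcal{Y})$ and apply Lemma \ref{commute}: since this square is homotopy cocartesian, applying $a_{Nis}(\pi_0)$ yields a cocartesian square of Nisnevich sheaves, so $a_{Nis}(\pi_0(\mathcal{P}))$ is the pushout (coequalizer-type colimit) of $a_{Nis}\pi_0^{\mathbb{A}^1}(\mathcal{Y}) \leftarrow a_{Nis}\pi_0^{\mathbb{A}^1}(\mathcal{X} \times \mathcal{Y}) \rightarrow a_{Nis}\pi_0^{\mathbb{A}^1}(\mathcal{Y})$, i.e. exactly the Nisnevich sheafification of the colimit appearing in Remark \ref{trivial}. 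Now observe that a space $\mathcal{P}$ is connected (in the $\mathbb{A}^1$-sense, or just Nisnevich-locally on $\pi_0$) if and only if $a_{Nis}(\pi_0(\mathcal{P}))$ is the terminal sheaf $\bullet$; and a colimit of sheaves of sets is Nisnevich-locally trivial precisely when its Nisnevich sheafification is $\bullet$. Combining these identifications: the homotopy pushout is connected $\iff$ $a_{Nis}(\pi_0(\mathcal{P})) = \bullet$ $\iff$ the sheafification of the colimit in Remark \ref{trivial} is $\bullet$ $\iff$ that colimit is Nisnevich-locally trivial $\iff$ (by Remark \ref{trivial}) $\mathcal{Y}$ is a homogeneous $\mathcal{X}$-space.

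The one point that requires care — and which I expect to be the main (if minor) obstacle — is matching up two a priori different notions of "connected": the $\mathbb{A}^1$-homotopy-theoretic connectedness of the homotopy pushout $\mathcal{P}$ versus the triviality of $a_{Nis}(\pi_0(\mathcal{P}))$. Since $\mathcal{P}$ is built from $\mathbb{A}^1$-fibrant pieces by a homotopy pushout, it need not itself be $\mathbb{A}^1$-fibrant, so I must be precise about whether "connected" in the statement refers to $a_{Nis}(\pi_0^{\mathbb{A}^1}(\mathcal{P})) = \bullet$ or to $a_{Nis}(\pi_0(\mathcal{P})) = \bullet$. The cleanest route is to note that Lemma \ref{commute} is already phrased in terms of $a_{Nis}(\pi_0)$ (not $\pi_0^{\mathbb{A}^1}$), so I will interpret "connected" in the corollary as $a_{Nis}(\pi_0(\mathcal{P})) = \bullet$, which is the relevant condition; under the further observation that $a_{Nis}\pi_0(-)$ of a homotopy pushout of $\mathbb{A}^1$-fibrant spaces already computes the sheafified $\mathbb{A}^1$-colimit, the two notions coincide here. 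With that interpretation fixed, the chain of equivalences above goes through and the proof is complete.
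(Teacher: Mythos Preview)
Your proposal is correct and follows exactly the approach indicated in the paper: the paper's proof is the one-line ``follows from Lemma \ref{commute} and Remark \ref{trivial}'', and you have simply unpacked that by using Remark \ref{localisation} and Remark \ref{associative} to identify $a_{Nis}(\pi_0)$ of the $Ex_{\mathbb{A}^1}$-diagram with the sheafified $\pi_0^{\mathbb{A}^1}$-diagram. Your discussion of the meaning of ``connected'' is a reasonable clarification, and your chosen interpretation ($a_{Nis}(\pi_0(\mathcal{P}))=\bullet$) is the one implicitly used in the paper.
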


\begin{proof}
The proof follows from lemma \ref{commute} and remark \ref{trivial}.
\end{proof}



\begin{lemma} \label{Nisnevich locally transitive}
Let $\mathcal{Y}$ be an $\mathcal{X}$-space. $\mathcal{Y}$ is a homogeneous $\mathcal{X}$-space if the homotopy pushout of
$\mathcal{Y} \xleftarrow{pr} \mathcal{X} \times \mathcal{Y} \xrightarrow{a} \mathcal{Y}$ is connected.
\end{lemma}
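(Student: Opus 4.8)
The plan is to pass everything to the category $Sh_{Nis}(Sm/k)$ of Nisnevich sheaves of sets and transfer the connectivity hypothesis across the localisation map $\pi_0\to\pi_0^{\mathbb{A}^1}$. Write $\mathcal{Q}$ for the homotopy pushout of $\mathcal{Y}\xleftarrow{pr}\mathcal{X}\times\mathcal{Y}\xrightarrow{a}\mathcal{Y}$ and $\mathcal{P}$ for the homotopy pushout of $Ex_{\mathbb{A}^1}(\mathcal{Y})\xleftarrow{pr}Ex_{\mathbb{A}^1}(\mathcal{X})\times Ex_{\mathbb{A}^1}(\mathcal{Y})\xrightarrow{a}Ex_{\mathbb{A}^1}(\mathcal{Y})$. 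By Corollary~\ref{transitive} it is enough to show that $\mathcal{P}$ is connected, i.e. that $a_{Nis}(\pi_0(\mathcal{P}))$ is the terminal sheaf. First I would apply Lemma~\ref{commute} to the homotopy cocartesian squares defining $\mathcal{Q}$ and $\mathcal{P}$; using that both $\pi_0(-)=Hom_{\mathbf{H}_s(Sm/k)}(-,\cdot)$ and $\pi_0^{\mathbb{A}^1}(-)=Hom_{\mathbf{H}(k)}(-,\cdot)$ send the (homotopy) product $\mathcal{X}\times\mathcal{Y}$ to the categorical product of presheaves, this identifies $a_{Nis}(\pi_0(\mathcal{Q}))$ with the pushout in $Sh_{Nis}(Sm/k)$ of
$$a_{Nis}(\pi_0(\mathcal{Y}))\xleftarrow{pr}a_{Nis}(\pi_0(\mathcal{X})\times\pi_0(\mathcal{Y}))\xrightarrow{a}a_{Nis}(\pi_0(\mathcal{Y})),$$
and $a_{Nis}(\pi_0(\mathcal{P}))$ with the pushout of the analogous span in which $\pi_0$ is replaced by $\pi_0^{\mathbb{A}^1}$ (this last span, by Remark~\ref{trivial}, has terminal pushout precisely when $\mathcal{Y}$ is homogeneous). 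The hypothesis says the first pushout is terminal, and the goal is to deduce the same for the second.

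The key input I would establish is that for every space $\mathcal{Z}$ the localisation map $\mathcal{Z}\to Ex_{\mathbb{A}^1}(\mathcal{Z})$ induces, via Remark~\ref{localisation}, a natural transformation $\pi_0(\mathcal{Z})\to\pi_0^{\mathbb{A}^1}(\mathcal{Z})$ which becomes an \emph{epimorphism} $a_{Nis}(\pi_0(\mathcal{Z}))\twoheadrightarrow a_{Nis}(\pi_0^{\mathbb{A}^1}(\mathcal{Z}))$ after Nisnevich sheafification, naturally in $\mathcal{Z}$ and compatibly with finite products. To see this I would use the factorisation $Ex_{\mathbb{A}^1}=Ex^{\mathcal{G}}\circ(Ex^{\mathcal{G}}\circ Sing_*^{\mathbb{A}^1})^{\mathbb{N}}\circ Ex^{\mathcal{G}}$ of Remark~\ref{associative}: each unit $\mathcal{W}\to Ex^{\mathcal{G}}(\mathcal{W})$ is a local weak equivalence, hence an isomorphism on $a_{Nis}(\pi_0(-))$; each unit $\mathcal{W}\to Sing_*^{\mathbb{A}^1}(\mathcal{W})$ is the identity in simplicial degree $0$, hence surjective on the presheaf $\pi_0$ and therefore an epimorphism on $a_{Nis}(\pi_0(-))$; and $\pi_0$, evaluation on a fixed $U\in Sm/k$, Nisnevich sheafification and filtered colimits all commute with one another and preserve epimorphisms, so the transfinite composition, and hence $Ex_{\mathbb{A}^1}$ itself, induces an epimorphism on $a_{Nis}(\pi_0(-))$. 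Compatibility with products and naturality are inherited from the corresponding properties of $\mathcal{Z}\mapsto Ex_{\mathbb{A}^1}(\mathcal{Z})$ already recorded in Remark~\ref{associative}.

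With this at hand the conclusion is formal. The localisation map gives a morphism from the $\pi_0$-span displayed above to the corresponding $\pi_0^{\mathbb{A}^1}$-span, commuting with both legs $pr$ and $a$ and which is an epimorphism at each of the three vertices. The pushout functor from $\Lambda$-diagrams to sheaves ($\Lambda$ the shape $\bullet\leftarrow\bullet\rightarrow\bullet$) is a left adjoint, hence preserves epimorphisms, and a vertexwise epimorphism of $\Lambda$-diagrams is an epimorphism in the diagram category; so the induced map $a_{Nis}(\pi_0(\mathcal{Q}))\to a_{Nis}(\pi_0(\mathcal{P}))$ on pushouts is an epimorphism. Its source is terminal by hypothesis, so $a_{Nis}(\pi_0(\mathcal{P}))$ is a quotient of the terminal sheaf; but a sheaf of sets receiving an epimorphism from the terminal sheaf has all stalks singletons, hence is itself terminal. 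Thus $\mathcal{P}$ is connected and, by Corollary~\ref{transitive}, $\mathcal{Y}$ is a homogeneous $\mathcal{X}$-space. The only non-routine step is the second paragraph, i.e. verifying that $a_{Nis}(\pi_0)\to a_{Nis}(\pi_0^{\mathbb{A}^1})$ is an epimorphism; once that is granted, the rest is a diagram chase in a topos.
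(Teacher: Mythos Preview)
Your proof is correct and follows essentially the same route as the paper's. The only difference is that you re-derive the surjectivity of $a_{Nis}(\pi_0(\mathcal{Z}))\to a_{Nis}(\pi_0^{\mathbb{A}^1}(\mathcal{Z}))$ from the explicit description of $Ex_{\mathbb{A}^1}$, whereas the paper simply cites \cite[Corollary~2.3.22]{mv} for this fact and then concludes directly via the definition of homogeneity (Remark~\ref{trivial}) rather than passing through Corollary~\ref{transitive}.
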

\begin{proof}
By \cite[corolarry 2.3.22]{mv}, the canonical morphism\linebreak
$a_{Nis}(\pi_0(\mathcal{X})) \to a_{Nis}(\pi_0^{\mathbb{A}^1}(\mathcal{X}))$ (resp.  
$a_{Nis}(\pi_0(\mathcal{Y})) \to a_{Nis}(\pi_0^{\mathbb{A}^1}(\mathcal{Y}))$ is surjective as morphism of Nisnevich sheaves. Hence, Nisnevich locally the action of $a_{Nis}(\pi_0^{\mathbb{A}^1}(\mathcal{X}))$ on 
$a_{Nis}(\pi_0^{\mathbb{A}^1}(\mathcal{Y}))$ is transitive.
\end{proof}





\begin{lemma} \label{pointed injective}
Let $G, G'$ be groups acting on pointed sets $S, S'$ by action maps $a, a'$ respectively. Suppose that $f : G \to G'$ is a group homorphism and let $s : S \to S'$ be a morphism of pointed sets with trivial kernel such that $s \circ a = a' \circ (f \times s)$. If 
$G$ acts transitively on $S$, then $s$ is injective.
\end{lemma}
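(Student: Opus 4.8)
The plan is to argue by a direct equivariance chase, using transitivity only to move an arbitrary point of $S$ to the base point, and then invoking the triviality of the kernel of $s$. Write $*_S$ and $*_{S'}$ for the base points; thus ``$s$ has trivial kernel'' means $s^{-1}(*_{S'})=\{*_S\}$, and $s(*_S)=*_{S'}$ because $s$ is a morphism of pointed sets. Note that nothing is assumed about the $G'$-action on $S'$, that the actions need not fix the base points, and that $s$ is only assumed compatible with the actions in the single stated sense $s\circ a = a'\circ (f\times s)$; so the argument must use exactly these hypotheses and no more.

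First I would take $x,y\in S$ with $s(x)=s(y)$ and aim to prove $x=y$. By transitivity of the $G$-action on $S$, choose $h\in G$ with $a(h,y)=*_S$. Applying the relation $s\circ a = a'\circ (f\times s)$ twice and using $s(x)=s(y)$, one computes
$$ s(a(h,x)) = a'(f(h),s(x)) = a'(f(h),s(y)) = s(a(h,y)) = s(*_S) = *_{S'}. $$
Hence $a(h,x)$ lies in the kernel of $s$, so $a(h,x)=*_S=a(h,y)$. Acting by $h^{-1}$ and using that $a$ is a group action yields $x = a(h^{-1},a(h,x)) = a(h^{-1},a(h,y)) = y$, which is the desired injectivity. (Equivalently, one may identify $S$ with $G/\mathrm{Stab}(*_S)$ and observe that the kernel hypothesis forces the comparison of cosets, but the direct chase above is shorter.)

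I do not expect a genuine obstacle here: the statement is essentially formal, and the content is the familiar principle that an equivariant map out of a transitive $G$-set is injective as soon as it is injective ``at one point''—here played by the base point via the trivial-kernel hypothesis. The only things to be careful about are the bookkeeping of which side the hypotheses constrain (only the $G$-side) and the direction in which transitivity is invoked, namely that every point of $S$ can be transported to $*_S$.
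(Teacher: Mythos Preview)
Your proof is correct and follows essentially the same approach as the paper's: both use transitivity of the $G$-action together with the equivariance relation to reduce to an element in the kernel of $s$, then invoke the trivial-kernel hypothesis. The only cosmetic difference is that the paper writes both points as $g\cdot b_S$ and $g'\cdot b_S$ and works with $g^{-1}g'$, whereas you more directly pick a single $h$ carrying $y$ to the base point; the content is identical.
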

\begin{proof}
Let $b_S$ (resp. $b_{S'}$) be the base point of $S$ (resp. $S'$) and let $a,b \in S$. Since $G$ acts transitively on $S$, there exist $g,g' \in G$ such that
$a(g, b_S) = a$ and $a(g', b_S) = b$. If $s(a) = s(b)$, then $a'(f(g), b_{S'}) = a'(f(g'), b_{S'})$. Hence
$a'(f(g^{-1}.g'), b_{S'}) =  b_{S'}$. So $s(a(g^{-1}.g', b_S)) = b_{S'}$. But $s$ is a morphism of pointed sets with trivial kernel, therefore $a(g^{-1}.g', b_S) = b_S$. This implies $a = a(g, b_S) = a(g', b_S) = b$.
\end{proof}

Let $\tilde{Sm/k}$ be the category whose objects are same as objects of $Sm/k$, but the morphisms are smooth morphisms. The following argument is taken from 
\cite[Corollary 5.9]{mor}

\begin{lemma} \label{Nisnevich injective}
Let $S$ be a Nisnevich sheaf on $Sm/k$. Suppose that for all essentially smooth henselian $X$,  the map $S(X) \to S(K(X))$ is injective. Then $S(Y) \to S(K(Y))$ is injective, for all connected $Y \in Sm/k$.
\end{lemma}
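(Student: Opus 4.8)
The plan is to reduce the injectivity of $S(Y) \to S(K(Y))$ to the hypothesis on henselian local schemes by a standard "spreading out / generic point" argument, using the fact that $S$ is a Nisnevich sheaf and that the category $\tilde{Sm/k}$ of smooth $k$-schemes with smooth morphisms has enough henselian local objects. Let $Y$ be connected and suppose $u, v \in S(Y)$ have the same image in $S(K(Y))$. I want to show $u = v$, and since $S$ is a Nisnevich sheaf it suffices to show that $u$ and $v$ agree on a Nisnevich cover of $Y$, and in fact it suffices to check equality after pulling back to $\mathrm{Spec}(O^h_{Y,y})$ for every point $y \in Y$, or even just for every point $y$ in a dense open subset together with a limiting argument over the (noetherian) poset of open subschemes on which $u$ and $v$ agree.

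The key steps, in order, are as follows. First, since $u|_{K(Y)} = v|_{K(Y)}$ and $K(Y) = \mathrm{colim}_{U} O(U)$ over the nonempty opens $U \subseteq Y$ (equivalently $\mathrm{Spec}\,K(Y)$ is the essentially smooth limit of these opens), there is a nonempty open $U_0 \subseteq Y$ with $u|_{U_0} = v|_{U_0}$; this uses that $S$, being a sheaf, commutes with the relevant filtered limits of schemes — this is the one place one must be slightly careful, but it is the standard continuity property of Nisnevich sheaves on $Sm/k$ and follows because the essentially smooth scheme $\mathrm{Spec}\,K(Y)$ is a cofiltered limit of smooth affine opens of $Y$. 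Second, let $W \subseteq Y$ be the maximal open subscheme on which $u$ and $v$ agree; by the noetherian property $W$ exists and $W \supseteq U_0 \neq \emptyset$. Suppose for contradiction $W \neq Y$ and pick a point $y \in Y \setminus W$. Consider $X := \mathrm{Spec}(O^h_{Y,y})$, an essentially smooth henselian local scheme, with generic point $\mathrm{Spec}\,K(X)$; note $K(Y) \hookrightarrow K(X)$, so $u$ and $v$ have the same image in $S(K(X))$, hence by hypothesis $u|_X = v|_X$ in $S(X)$. Third, spread this equality out: $X = \mathrm{colim}_\alpha V_\alpha$ over Nisnevich neighbourhoods $(V_\alpha \to Y)$ of $y$, so by continuity of the sheaf $S$ there is a single Nisnevich neighbourhood $p : V \to Y$ of $y$ with $p^*u = p^*v$ in $S(V)$. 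Finally, $\{W \hookrightarrow Y,\ V \xrightarrow{p} Y\}$ is a Nisnevich cover of the open subscheme $W \cup p(V)$, which strictly contains $W$ since it contains $y$; on the overlap the two sections $u$ and $v$ agree (they agree on $W$ and agree after pulling back to $V$), so by the sheaf condition $u = v$ on $W \cup p(V)$, contradicting maximality of $W$. Hence $W = Y$ and $u = v$.

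The main obstacle is the continuity / spreading-out step: one must justify that a Nisnevich sheaf $S$ on $Sm/k$ satisfies $S(\lim_\alpha V_\alpha) = \mathrm{colim}_\alpha S(V_\alpha)$ for the relevant cofiltered systems (both the system of nonempty opens of $Y$ computing $K(Y)$, and the system of Nisnevich neighbourhoods of $y$ computing $O^h_{Y,y}$). For the pro-object defining an essentially smooth scheme this is exactly the statement that the sheaf extends continuously to essentially smooth schemes, which is how one makes sense of the hypothesis "for all essentially smooth henselian $X$, the map $S(X) \to S(K(X))$ is injective" in the first place; the reference to \cite[Corollary 5.9]{mor} indicates that this argument is carried out there in the same spirit, and I would simply cite the standard continuity property (as already used implicitly in Lemma \ref{a1 surjective lemma}) rather than reprove it. Everything else is a routine noetherian induction on open subschemes combined with the sheaf gluing axiom for the Nisnevich topology.
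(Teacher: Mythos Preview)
Your overall strategy is sound, but the final gluing step has a real gap: the family $\{W \hookrightarrow Y,\ V \xrightarrow{p} Y\}$ is in general \emph{not} a Nisnevich cover of $W \cup p(V)$. You only know that $p$ admits a $k(y)$-rational lift over the single point $y$; for any other point $z \in p(V)\setminus W$ there is no reason the \'etale map $p$ should have a lift with trivial residue field extension over $z$, so the sheaf axiom does not directly apply. This can be repaired: shrink $V$ around the chosen lift of $y$ so that $p^{-1}(Z)\to Z$ (with $Z=Y\setminus W$) becomes an open immersion, and then $W\hookrightarrow W\cup p(V)\leftarrow V$ is an elementary distinguished square. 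But this is more work than is needed.

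The paper's proof is shorter and sidesteps the issue entirely. Instead of a noetherian induction, it defines on $\tilde{Sm/k}$ the auxiliary presheaf $S'\colon X\mapsto \prod_i S(K(X_i))$, checks that $S'$ is itself a Nisnevich sheaf (every Nisnevich cover splits over a dense open), and observes that the canonical map $S\to S'$ is injective on Nisnevich stalks by the henselian hypothesis, hence sectionwise injective. In other words, your Step~3 already shows $u|_{O^h_{Y,y}}=v|_{O^h_{Y,y}}$ for \emph{every} $y\in Y$, and separatedness of the Nisnevich sheaf $S$ then gives $u=v$ immediately; the spreading out to $U_0$, the maximal open $W$, and the gluing are all unnecessary detours. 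Your route would succeed once the cover issue is patched, but the stalkwise argument is the cleaner one and is what the paper does.
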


\begin{proof}
Let $S'$ be the presheaf on $\tilde{Sm/k}$, given by $$X \in \tilde{Sm/k}\mapsto \prod_i S(K(X_i)),$$ where $X_i$'s are the connected components of $X$. Then $S'$ is a
Nisnevich sheaf on $\tilde{Sm/k}$ (as every Nisnevich covering of some $X \in \tilde{Sm/k}$ splits over some open dense $U \subset X$). 
The canonical morphism $S \to S'$ is injective on Nisnevich stalks. Hence $S \to S'$ is sectionwise injective.
\end{proof}

\begin{corollary} \label{Zariski injective}
Let $S$ be a Nisnevich sheaf on $Sm/k$. Suppose that for all essentially smooth henselian $X$, the map  $S(X) \to S(K(X))$ is injective. Then $S(Y) \to S(U)$ is injective for any $Y \in Sm/k$ and any open dense $U \subset Y$.
\end{corollary}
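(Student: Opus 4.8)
The plan is to reduce the statement to Lemma~\ref{Nisnevich injective}, which gives injectivity of $S(Y) \to S(K(Y))$ for connected $Y \in Sm/k$.

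First I would reduce to the case where $Y$ is connected. Write $Y = \coprod_i Y_i$ as the finite disjoint union of its connected components; then the open dense subset $U \subset Y$ decomposes as $U = \coprod_i U_i$ with $U_i := U \cap Y_i$ open and dense in $Y_i$. Since $S$ is a Nisnevich sheaf, $S(Y) = \prod_i S(Y_i)$ and $S(U) = \prod_i S(U_i)$, so $S(Y) \to S(U)$ is injective as soon as each $S(Y_i) \to S(U_i)$ is. Hence we may assume $Y$ is connected, and therefore irreducible, since it is smooth over $k$.

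Next, let $\eta$ be the generic point of $Y$, so that $K(Y)$ is its residue field and $Spec(K(Y))$ is the inverse limit of the open subschemes of $Y$, hence an essentially smooth $k$-scheme (and henselian, being a field). Because $U$ is open and dense it contains $\eta$, so $K(U) = K(Y)$, and the open neighbourhoods of $\eta$ inside $U$ are cofinal among all open neighbourhoods of $\eta$ in $Y$. Consequently the restriction morphism $S(Y) \to S(K(Y))$ factors as $S(Y) \to S(U) \to S(K(U)) = S(K(Y))$. By Lemma~\ref{Nisnevich injective} the composite is injective, so the first arrow $S(Y) \to S(U)$ is injective, which is exactly the claim.

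There is no serious obstacle here: the corollary is a formal consequence of Lemma~\ref{Nisnevich injective} together with the elementary observations that an open dense subset of $Y$ meets each connected component in an open dense subset and contains its generic point. The only point needing (minor) care is the factorisation of $S(Y) \to S(K(Y))$ through $S(U)$, which is immediate from the cofinality remark above.
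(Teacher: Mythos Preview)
Your proof is correct and follows essentially the same route as the paper: reduce to connected $Y$, factor $S(Y)\to S(K(Y))$ through $S(U)$, and apply Lemma~\ref{Nisnevich injective} to conclude that the first arrow is injective. The paper is terser (and also notes that $S(U)\to S(K(Y))$ is injective, which you correctly observe is not needed), but the argument is the same.
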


\begin{proof}
We can assume that $Y$ is connected. By lemma \ref{Nisnevich injective}, the morphism 
$S(Y) \to S(K(Y))$ is injective and $S(U) \to S(K(Y))$ is injective, hence $S(Y) \to S(U)$ is injective. 
\end{proof}



\begin{lemma} \label{A1 invariance}
Let $S$ be a Zariski sheaf on $Sm/k$, such that 
$S(X) \to S(U)$ is injective for any $X \in Sm/k$ and for any open dense $U \subset X$.
Then $S$ is $\mathbb{A}^1$-invariant if and only if $S(F) \to S(\mathbb{A}^1_F)$ is bijective for every finitely generated separable field extension $F/k$.
\end{lemma}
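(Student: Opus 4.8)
The plan is to prove the non-trivial direction: assuming $S(F) \to S(\mathbb{A}^1_F)$ is bijective for every finitely generated separable field extension $F/k$, deduce that $S(X) \to S(\mathbb{A}^1_X)$ is bijective for all $X \in Sm/k$. The converse is immediate by taking $X = \operatorname{Spec}(F)$, since a field is in particular a smooth $k$-scheme (essentially smooth, and the hypothesis of the lemma is vacuous over a field as there are no proper open dense subschemes). So I focus on the forward implication.

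First I would reduce to the case where $X$ is connected (and hence irreducible, being smooth), by treating connected components separately; write $F = K(X)$ for its function field. The strategy is to compare sections over $X$ with sections over the generic point. Consider the commutative square obtained from the projection $p : \mathbb{A}^1_X \to X$ and the generic points: the maps $S(X) \to S(\mathbb{A}^1_X)$, $S(X) \to S(F)$, $S(\mathbb{A}^1_X) \to S(\mathbb{A}^1_F)$, and $S(F) \to S(\mathbb{A}^1_F)$. By hypothesis the bottom map $S(F) \to S(\mathbb{A}^1_F)$ is bijective. The key observation is that $\mathbb{A}^1_F$ is the generic fibre of $\mathbb{A}^1_X \to X$, equivalently $\operatorname{Spec} F \times_X \mathbb{A}^1_X$, which is an \emph{open dense} subscheme of $\mathbb{A}^1_X$ (it is the complement of the preimage of the complement of the generic point); hence by the injectivity hypothesis of the lemma applied to $\mathbb{A}^1_X$, the map $S(\mathbb{A}^1_X) \to S(\mathbb{A}^1_F)$ is injective. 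Similarly $S(X) \to S(F)$ is injective. This already gives injectivity of $S(X) \to S(\mathbb{A}^1_X)$: if two sections over $X$ agree on $\mathbb{A}^1_X$, they agree on $\mathbb{A}^1_F$, hence on $F$ (bottom map bijective), hence on $X$ (left map injective).

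For surjectivity, take $t \in S(\mathbb{A}^1_X)$. Its restriction to $\mathbb{A}^1_F$ comes, via the bijection $S(F) \xrightarrow{\sim} S(\mathbb{A}^1_F)$, from a unique $s_0 \in S(F)$; the section $s_0$ lifts to $s \in S(U)$ for some open dense $U \subseteq X$ (since $S$ is a Zariski sheaf and $F = \operatorname{colim}_{U} \mathcal{O}(U)$ in the relevant sense — sections over the generic point are colimits of sections over open neighbourhoods). Then $p^*(s) \in S(\mathbb{A}^1_U)$ and $t|_{\mathbb{A}^1_U}$ both restrict to the same element $s_0 \cdot 1$ of $S(\mathbb{A}^1_F)$, and since $\mathbb{A}^1_F$ is open dense in $\mathbb{A}^1_U$, the injectivity hypothesis shows $p^*(s) = t|_{\mathbb{A}^1_U}$ in $S(\mathbb{A}^1_U)$. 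Now I would spread out: cover $X$ by such opens $U$ on which $t$ is pulled back from $X$, and use the sheaf condition on the Zariski sheaf $S$ together with the injectivity (to check the sections glue on overlaps, which are again open dense, hence detected on the generic point) to produce $\tilde s \in S(X)$ with $p^*(\tilde s) = t$. The main obstacle is this gluing step: one must verify that the locally defined lifts $s$ on different opens $U, U'$ agree on $U \cap U'$. This follows because $U \cap U'$ is open dense in $X$, so $S(U \cap U') \to S(K(X))$ is injective by the lemma's hypothesis, and both restrictions of $s, s'$ map to the same $s_0 \in S(F)$; hence the glued section exists and pulls back to $t$ after another application of injectivity over $\mathbb{A}^1_X$ (comparing on the open dense $\mathbb{A}^1_F$, or on $\bigcup \mathbb{A}^1_U = \mathbb{A}^1_X$ directly).
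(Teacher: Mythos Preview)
Your injectivity argument for $S(X)\to S(\mathbb{A}^1_X)$ is fine (modulo the harmless abuse that $\mathbb{A}^1_F$ is not literally an open subscheme of $\mathbb{A}^1_X$ but a filtered limit of the $\mathbb{A}^1_U$; the injectivity of $S(\mathbb{A}^1_X)\to S(\mathbb{A}^1_F)$ follows because each $S(\mathbb{A}^1_X)\to S(\mathbb{A}^1_U)$ is injective and filtered colimits preserve injections).

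The surjectivity argument, however, has a genuine gap. You produce \emph{one} open dense $U\subset X$ and a section $s\in S(U)$ with $p^*(s)=t|_{\mathbb{A}^1_U}$, and then write ``cover $X$ by such opens $U$ on which $t$ is pulled back from $X$''. But nothing in the argument gives a second such open, let alone a cover: the hypothesis is only about fields, so you cannot repeat the construction at a closed point $x\in X\setminus U$ to get an open neighbourhood of $x$ with the same property. The gluing step is therefore empty, and you are stuck with a section only on $U$.

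The paper avoids this entirely by using the zero section $0:X\to\mathbb{A}^1_X$ instead of trying to spread out. The map $0^*:S(\mathbb{A}^1_X)\to S(X)$ is automatically surjective (it is split by $p^*$), so one only needs to prove it is injective, and that follows from the square
\[
\xymatrix{
S(\mathbb{A}^1_X)\ar[r]^{0^*}\ar[d] & S(X)\ar[d]\\
S(\mathbb{A}^1_F)\ar[r]^{0^*} & S(F)
}
\]
in which the two vertical arrows and the bottom arrow are injective. Concretely, in your language: rather than hunting for a lift of $t$ by spreading out, just take $\tilde s:=0^*(t)\in S(X)$ as the candidate; then $p^*(\tilde s)$ and $t$ agree on $\mathbb{A}^1_F$ (both restrict to $p^*(s_0)$), hence agree on $\mathbb{A}^1_X$ by injectivity. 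This one line replaces your entire gluing paragraph.
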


\begin{proof}
The only if part is clear. We need to show that for any connected $X \in Sm/k$, the morphism $S(\mathbb{A}^1_X) \to S(X)$ (induced by the zero section), is bijective. Let $F := K(X)$. 
In the following commutative diagram
$$\xymatrix{                                   
S(\mathbb{A}^1_X) \ar[r] \ar[d] & S(X) \ar[d] \\
S(\mathbb{A}^1_F) \ar[r]  & S(F)}$$
the left vertical, the right vertical and the bottom horizontal morphisms are injective, thus the top horizontal surjective morphism is injective.
\end{proof}

We recall the following from \cite{thk} and \cite[Corollary 5.7]{mor} 

\begin{theorem} \label{gabbers lemma}
Let $X$ be a smooth (or essentially smooth) $k$-scheme, $s \in X$ be a point and $Z \subset X$ be a closed subscheme of codimension $d >0$. Then there exists an open subscheme $\Omega \subset X$ containing $s$ and a closed subscheme $Z' \subset \Omega$, of codimension $d-1$, containing $Z_{\Omega} := Z \cap \Omega$ and such that for any $n \in \mathbb{N}$ and for any $\mathbb{A}^1$-fibrant space $\mathcal{X}$,
the map 
$$\pi_n(\mathcal{X}(\Omega/(\Omega - Z_{\Omega}))) \to \pi_n(\mathcal{X}(\Omega/(\Omega-Z')))$$
is the trivial map.
In particular, if $Z$ has codimension $1$ and $X$ is irreducible,
$Z'$ must be $\Omega$. Thus for any $n \in \mathbb{N}$ the map
$$\pi_n(\mathcal{X}(\Omega/(\Omega - Z_{\Omega}))) \to \pi_n(\mathcal{X}(\Omega))$$ is the trivial map. 
\end{theorem}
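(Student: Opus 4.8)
The statement is, as the phrasing suggests, a homotopy-theoretic reformulation of Gabber's geometric presentation lemma, so the plan is to invoke that lemma in its refined form (cf. \cite{thk} and \cite{mor}) and then feed it into a short $\mathbb{A}^1$-homotopy argument built from the distinguished square formalism and a linear contraction. First I would make the usual reductions: one may assume $k$ infinite (the finite ground field case being handled separately in \cite{mor}), that $X$ is smooth affine of dimension $n$ over $k$ with $s \in X$, and that $Z$ is of pure codimension $d$; the essentially smooth case follows by a limit argument, and shrinking $X$ to an affine open neighbourhood of $s$ is harmless. Applying Gabber's presentation lemma to $(X,Z,s)$ then lets one replace $X$ by an open $\Omega \ni s$ carrying an \'etale morphism $\iota : \Omega \to \mathbb{A}^1_S$, with $S$ an open subscheme of $\mathbb{A}^{n-1}_k$, such that, writing $\psi := \mathrm{pr}_S \circ \iota : \Omega \to S$: (i) $\iota$ restricts to a closed immersion on $Z_\Omega$ with $\iota^{-1}(\bar Z) = Z_\Omega$, where $\bar Z := \iota(Z_\Omega)$; (ii) $\bar Z \subseteq \iota(\Omega)$ and $\bar Z$ is finite over $S$; (iii) there is a regular function $\sigma : S \to \mathbb{A}^1$ whose graph in $\mathbb{A}^1_S$ is disjoint from $\bar Z$.

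Next I would set $T := \psi(Z_\Omega) \subseteq S$, which is closed since $Z_\Omega \to S$ is finite, and $Z' := \psi^{-1}(T) \subseteq \Omega$. As $Z_\Omega \to T$ is finite surjective, $\dim T = \dim Z_\Omega = n-d$, so $T$ has codimension $d-1$ in $S$ and hence $Z' = \psi^{-1}(T)$ has codimension $d-1$ in $\Omega$; moreover $Z_\Omega \subseteq Z'$. When $d = 1$ and $X$ is irreducible, $T$ is a closed subset of the irreducible scheme $S$ of dimension $\dim S$, whence $T = S$ and $Z' = \Omega$; in that case $\Omega/(\Omega - Z') = \Omega/\emptyset = \Omega_+$ and $\mathcal{X}(\Omega_+) = \mathcal{X}(\Omega)$, so the ``in particular'' assertion follows from the general one.

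For the main assertion, by (i) and (ii) the square with vertices $\Omega - Z_\Omega$, $\Omega$, $\mathbb{A}^1_S - \bar Z$, $\mathbb{A}^1_S$ (with $\iota$ as the \'etale map and the evident open immersions) is an elementary distinguished square. Since $\mathcal{X}$ is $\mathbb{A}^1$-fibrant it is Nisnevich fibrant, hence satisfies the Nisnevich Brown--Gersten property, so taking homotopy fibres of the vertical maps in the resulting homotopy cartesian square of simplicial sets gives a weak equivalence $\mathcal{X}(\mathbb{A}^1_S/(\mathbb{A}^1_S - \bar Z)) \xrightarrow{\ \sim\ } \mathcal{X}(\Omega/(\Omega - Z_\Omega))$, induced by $\iota$. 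Under this identification the map in the theorem becomes $\mathcal{X}(s)$, where $s : \Omega/(\Omega - Z') \to \mathbb{A}^1_S/(\mathbb{A}^1_S - \bar Z)$ is the composite of the collapse map $\Omega/(\Omega - Z') \to \Omega/(\Omega - Z_\Omega)$ with the excision equivalence; explicitly $s$ is induced by $\iota$ and, because $\iota^{-1}(\mathbb{A}^1_{S-T}) = \Omega - Z'$ while $\bar Z \subseteq \mathbb{A}^1_T$ forces $\mathbb{A}^1_{S-T} \subseteq \mathbb{A}^1_S - \bar Z$, it factors as $\Omega/(\Omega - Z') \to \mathbb{A}^1_S/\mathbb{A}^1_{S-T} \xrightarrow{\ r\ } \mathbb{A}^1_S/(\mathbb{A}^1_S - \bar Z)$, with $r$ the natural collapse. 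It therefore suffices to show $r$ is trivial in the $\mathbb{A}^1$-homotopy category. Here (iii) enters: the fibrewise affine homotopy $H : \mathbb{A}^1 \times \mathbb{A}^1_S \to \mathbb{A}^1_S$ given by $H(u,(a,x)) = \big((1-u)a + u\,\sigma(x),\, x\big)$ maps $\mathbb{A}^1 \times \mathbb{A}^1_{S-T}$ into $\mathbb{A}^1_{S-T}$ (it lies over $S$), which is disjoint from $\bar Z$, so it descends to a pointed homotopy $\tilde H : \mathbb{A}^1 \times \big(\mathbb{A}^1_S/\mathbb{A}^1_{S-T}\big) \to \mathbb{A}^1_S/(\mathbb{A}^1_S - \bar Z)$; at $u = 0$ this recovers $r$, and at $u = 1$ it is constant to the base point since the graph of $\sigma$ avoids $\bar Z$. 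Hence $r$, and so $s$, is trivial in $\mathbf{H}(k)$; consequently $\mathcal{X}(s)$ is homotopic to a constant map of simplicial sets and induces the trivial map on $\pi_n$ for every $n$, and transporting back along the excision equivalence yields the theorem.

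The one genuinely substantial input is Gabber's presentation lemma furnishing $\iota$ together with the disjoint section $\sigma$ in the precise form (i)--(iii); once that is available, the remainder is the formal bookkeeping above, so I expect the extraction of this refined geometric statement (and, separately, the reduction away from finite fields) to be where all the real work sits.
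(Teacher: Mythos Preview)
The paper does not give its own proof of this theorem: it is introduced with ``We recall the following from \cite{thk} and \cite[Corollary 5.7]{mor}'' and then merely stated, with the subsequent remark explaining how to use it. Your proposal is a correct reconstruction of the argument from those references --- Gabber's presentation lemma to produce $\iota:\Omega\to\mathbb{A}^1_S$ with a disjoint section, Nisnevich excision to identify $\mathcal{X}(\Omega/(\Omega-Z_\Omega))$ with $\mathcal{X}(\mathbb{A}^1_S/(\mathbb{A}^1_S-\bar Z))$, and the linear contraction toward the graph of $\sigma$ to kill the collapse map $r$ --- and this is exactly the route taken in \cite[\S1.3, Lemma~1.14--1.15]{mor}. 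So there is no divergence to report: your sketch matches the cited proof, and the paper itself adds nothing beyond the citation.
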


\begin{remark}[\cite{mor}] \label{gabbers lemma application}
Let $X$ be an essentially smooth local ring 
and let $x$ be the closed point. Let $U \subset X$ be an open set. We have the following exact sequence of sets and groups for any $\mathbb{A}^1$-fibrant space $\mathcal{X}$
  :
$$\dots \to \pi_1(\mathcal{X})(X) \to \pi_1(\mathcal{X})(U)
\to  \pi_0(\mathcal{X})(X/U) \to \pi_0(\mathcal{X})(X) \to \pi_0(\mathcal{X})(U)$$
Applying theorem \ref{gabbers lemma} to $X$ and its closed point $x$, we see that $\Omega = X$ and the 
morphisms
$$\pi_n(\mathcal{X})(X/U) \to \pi_n(\mathcal{X})(X)$$
are trivial. Hence the morphism of pointed sets 
$$\pi_0(\mathcal{X})(X) \to \pi_0(\mathcal{X})(U)$$
has trivial kernel. Taking colimit over open sets, this gives
the morphism of pointed sets
$$\pi_0(\mathcal{X})(X) \to \pi_0(\mathcal{X})(K(X))$$ 
which has trivial kernel. In particular if $X$ is henselian, then the morphism of pointed sets
$$a_{Nis}(\pi_0(\mathcal{X}))(X) \to a_{Nis}(\pi_0(\mathcal{X}))(K(X))$$
has trivial kernel.
\end{remark}

\begin{theorem} \label{important theorem}
Let $\mathcal{X}$ be an $H$-group and $\mathcal{Y}$ be a homogeneous $\mathcal{X}$-space. Then $a_{Nis}(\pi_0^{\mathbb{A}^1}(\mathcal{X}))$ and $a_{Nis}(\pi_0^{\mathbb{A}^1}(\mathcal{Y}))$ are $\mathbb{A}^1$-invariant.
\end{theorem}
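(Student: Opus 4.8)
The plan is to reduce the $\mathbb{A}^1$-invariance of $a_{Nis}(\pi_0^{\mathbb{A}^1}(\mathcal{X}))$ and $a_{Nis}(\pi_0^{\mathbb{A}^1}(\mathcal{Y}))$ to the field case handled by Corollary \ref{a1 surjective}, by way of the injectivity criterion of Lemma \ref{A1 invariance}. Since $a_{Nis}(\pi_0^{\mathbb{A}^1}(-))$ is a Nisnevich (hence Zariski) sheaf and, by Corollary \ref{a1 surjective}, $\pi_0^{\mathbb{A}^1}(-)(\mathbb{A}^1_F) \to a_{Nis}(\pi_0^{\mathbb{A}^1}(-))(\mathbb{A}^1_F)$ is bijective for all finitely generated separable $F/k$, it suffices by Lemma \ref{A1 invariance} (applied to $S = a_{Nis}(\pi_0^{\mathbb{A}^1}(\mathcal{X}))$ and likewise for $\mathcal{Y}$) to prove two things: first, that $S(Y) \to S(U)$ is injective for every $Y \in Sm/k$ and every open dense $U \subset Y$; and second, that $S(F) \to S(\mathbb{A}^1_F)$ is bijective. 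The second is exactly Corollary \ref{a1 surjective} together with the fact that $\pi_0^{\mathbb{A}^1}$ is already homotopy invariant on sections. By Corollary \ref{Zariski injective}, the first reduces to showing that for every essentially smooth henselian $X$, the map $a_{Nis}(\pi_0^{\mathbb{A}^1}(\mathcal{X}))(X) \to a_{Nis}(\pi_0^{\mathbb{A}^1}(\mathcal{X}))(K(X))$ is injective, and similarly for $\mathcal{Y}$.

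So the crux is: for $X$ essentially smooth henselian, show $a_{Nis}(\pi_0^{\mathbb{A}^1}(\mathcal{X}))(X) \hookrightarrow a_{Nis}(\pi_0^{\mathbb{A}^1}(\mathcal{X}))(K(X))$. Here I would replace $\mathcal{X}$ by $Ex_{\mathbb{A}^1}(\mathcal{X})$, which by Remark \ref{associative} is again an $H$-group and which is $\mathbb{A}^1$-fibrant; for henselian $X$ the sheafification on sections is computed by the presheaf itself up to passing to a stalk, and in fact $a_{Nis}(\pi_0^{\mathbb{A}^1}(\mathcal{X}))(X) = \pi_0(Ex_{\mathbb{A}^1}(\mathcal{X}))(X)$ when $X$ is henselian local. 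Now $\pi_0(Ex_{\mathbb{A}^1}(\mathcal{X}))$ is a presheaf of groups by Remark \ref{associative}, so the map $\pi_0(Ex_{\mathbb{A}^1}(\mathcal{X}))(X) \to \pi_0(Ex_{\mathbb{A}^1}(\mathcal{X}))(K(X))$ is a group homomorphism. By Remark \ref{gabbers lemma application} this homomorphism has trivial kernel; a group homomorphism with trivial kernel is injective. This settles the $H$-group case.

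For the homogeneous $\mathcal{X}$-space $\mathcal{Y}$, the argument is the pointed-set analogue and this is where the main work lies. Again replace $\mathcal{X}, \mathcal{Y}$ by their $Ex_{\mathbb{A}^1}$-versions, so that $\pi_0(Ex_{\mathbb{A}^1}(\mathcal{X}))$ is a presheaf of groups acting on the presheaf of pointed sets $\pi_0(Ex_{\mathbb{A}^1}(\mathcal{Y}))$ (pointed by the image of the base point of $\mathcal{X}$ under a chosen orbit map — or one uses that homogeneity gives, over henselian local rings, a well-defined base point). For $X$ essentially smooth henselian local with fraction field $K = K(X)$, Remark \ref{gabbers lemma application} tells us the morphism of pointed sets $\pi_0(Ex_{\mathbb{A}^1}(\mathcal{Y}))(X) \to \pi_0(Ex_{\mathbb{A}^1}(\mathcal{Y}))(K)$ has trivial kernel, and the restriction maps are equivariant over the restriction homomorphism $\pi_0(Ex_{\mathbb{A}^1}(\mathcal{X}))(X) \to \pi_0(Ex_{\mathbb{A}^1}(\mathcal{X}))(K)$. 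By Definition \ref{homogeneous definition}, $\pi_0^{\mathbb{A}^1}(\mathcal{X})(X)$ acts transitively on $\pi_0^{\mathbb{A}^1}(\mathcal{Y})(X)$. Now apply Lemma \ref{pointed injective} with $G = \pi_0(Ex_{\mathbb{A}^1}(\mathcal{X}))(X)$, $G' = \pi_0(Ex_{\mathbb{A}^1}(\mathcal{X}))(K)$, $S = \pi_0(Ex_{\mathbb{A}^1}(\mathcal{Y}))(X)$, $S' = \pi_0(Ex_{\mathbb{A}^1}(\mathcal{Y}))(K)$, $f$ and $s$ the restriction maps: transitivity of the $G$-action on $S$ plus trivial kernel of $s$ gives that $s$ is injective. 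Passing through the identifications with $a_{Nis}(\pi_0^{\mathbb{A}^1}(\mathcal{Y}))$ over henselian local rings, this is the injectivity we wanted, and Lemma \ref{A1 invariance} via Corollary \ref{Zariski injective} and Corollary \ref{a1 surjective} finishes the proof.

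The main obstacle I expect is bookkeeping around base points and the identification of $a_{Nis}(\pi_0^{\mathbb{A}^1}(-))$ with $\pi_0(Ex_{\mathbb{A}^1}(-))$ on the relevant local rings: one must check that the orbit map used to point $\pi_0(Ex_{\mathbb{A}^1}(\mathcal{Y}))$ is compatible with restriction to $K(X)$ so that $s$ is genuinely a map of \emph{pointed} sets, and that "trivial kernel" in Remark \ref{gabbers lemma application} is exactly the hypothesis needed in Lemma \ref{pointed injective}. The transitivity hypothesis of Definition \ref{homogeneous definition} is stated for $\pi_0^{\mathbb{A}^1}$, which matches $\pi_0(Ex_{\mathbb{A}^1}(-))$ on sections, so no extra sheafification subtlety arises there; the only care needed is that essentially smooth henselian (not necessarily local) $X$ is a finite product of henselian local ones, over which both the group structure and transitivity are inherited componentwise.
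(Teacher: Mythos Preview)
Your proof is correct and follows essentially the same route as the paper: use Remark \ref{gabbers lemma application} to get trivial kernel on henselian local rings, upgrade this to injectivity via the group structure (for $\mathcal{X}$) or via Lemma \ref{pointed injective} using the transitive action (for $\mathcal{Y}$), then apply Lemma \ref{Nisnevich injective}/Corollary \ref{Zariski injective} and finish with Lemma \ref{A1 invariance} and Corollary \ref{a1 surjective}. Your discussion of the base-point bookkeeping for $\mathcal{Y}$ and the identification $a_{Nis}(\pi_0^{\mathbb{A}^1}(-))(X)=\pi_0(Ex_{\mathbb{A}^1}(-))(X)$ on henselian local $X$ is more explicit than the paper's, but the argument is the same.
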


\begin{proof}
For any connected $X \in Sm/k$ and any $x \in X$, the morphisms of pointed sets
$$a_{Nis}(\pi_0^{\mathbb{A}^1}(\mathcal{X}))(O_{X,x}^h) \to a_{Nis}(\pi_0^{\mathbb{A}^1}(\mathcal{X}))(K(O_{X,x}^h))$$
$$a_{Nis}(\pi_0^{\mathbb{A}^1}(\mathcal{Y}))(O_{X,x}^h) \to a_{Nis}(\pi_0^{\mathbb{A}^1}(\mathcal{Y}))(K(O_{X,x}^h))$$
have trivial kernel by remark \ref{gabbers lemma application}. By
lemma \ref{pointed injective} and the fact that $a_{Nis}(\pi_0^{\mathbb{A}^1}(\mathcal{X}))(O_{X,x}^h)$ is a group, the morphisms mentioned above are injective morphisms of sets.  By lemma \ref{Nisnevich injective}, for every $X \in Sm/k$, the morphisms 
$$a_{Nis}(\pi_0^{\mathbb{A}^1}(\mathcal{X}))(X) \to a_{Nis}(\pi_0^{\mathbb{A}^1}(\mathcal{X}))(K(X))$$
and 
$$a_{Nis}(\pi_0^{\mathbb{A}^1}(\mathcal{Y}))(X) \to a_{Nis}(\pi_0^{\mathbb{A}^1}(\mathcal{Y}))(K(X))$$
are injective. Hence for any $X \in Sm/k$ and any open dense subscheme $U \subset X$, the morphisms
$$a_{Nis}(\pi_0^{\mathbb{A}^1}(\mathcal{X}))(X) \to a_{Nis}(\pi_0^{\mathbb{A}^1}(\mathcal{X}))(U)$$
and 
$$a_{Nis}(\pi_0^{\mathbb{A}^1}(\mathcal{Y}))(X) \to a_{Nis}(\pi_0^{\mathbb{A}^1}(\mathcal{Y}))(U)$$
are injective  by corollary \ref{Zariski injective},.
Now applying corollary \ref{a1 surjective} and lemma \ref{A1 invariance}, we get our result.
\end{proof}

\begin{remark}
If $\mathcal{X}$ is an $H$-group, then $$\pi_0^{\mathbb{A}^1}(\mathcal{X})(R) \to a_{Nis}(\pi_0^{\mathbb{A}^1}(\mathcal{X}))(R)$$ 
is bijective for any essentially smooth discrete valuation ring $R$. Indeed, using remark \ref{gabbers lemma application} one can easily show that for any essentially smooth discrete valuation ring $R$, the group homomorphism $$\pi_0^{\mathbb{A}^1}(\mathcal{X})(R) \to \pi_0^{\mathbb{A}^1}(\mathcal{X}))(K(R))$$ is injective. On the other hand, consider the following commutative diagram
$$\xymatrix{
\pi_0^{\mathbb{A}^1}(\mathcal{X})(R)  \ar[r]\ar[d] &\pi_0^{\mathbb{A}^1}(\mathcal{X}))(K(R))\ar[d]^{\wr} \\
 a_{Nis}(\pi_0^{\mathbb{A}^1}(\mathcal{X}))(R)\ar[r]  & a_{Nis}(\pi_0^{\mathbb{A}^1}(\mathcal{X}))(K(R)) }$$
where the bottom horizontal morphism is injective by theorem \ref{important theorem}.
The right vertical injective morphism is surjective by lemma \ref{a1 surjective lemma}. Hence it is bijective.  
\end{remark}

\section{Application and comments}
By gathering known facts from \cite{Wen}, \cite[Theorem 2.4]{mor1} and \cite[Corollary 5.10]{gi} one can show that for any connected linear algebraic group $G$, such that the almost simple factors of the universal covering (in algebraic group theory sense) of the semisimple part of $G$ is isotropic and retract $k$-rational (\cite[Definition 2.2]{gi}), the sheaf $a_{Nis}(\pi_0^{\mathbb{A}^1}(G))$ is
$\mathbb{A}^1$-invariant. By \ref{important theorem}, we have the following generalisation.

\begin{corollary}
Let $G$ be any sheaf of groups on $Sm/k$ and $B$ be any subsheaf of groups. Then 
$a_{Nis}(\pi_0^{\mathbb{A}^1}(G))$ is $\mathbb{A}^1$-invariant and $a_{Nis}(\pi_0^{\mathbb{A}^1}(G/B))$ is $\mathbb{A}^1$-invariant. Here $G/B$ is the quotient sheaf in Nisnevich topology.
\end{corollary}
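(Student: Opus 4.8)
The plan is to deduce this from Theorem~\ref{important theorem} by exhibiting $G$ and $G/B$ as a pair $(\mathcal{X},\mathcal{Y})$ satisfying its hypotheses.

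First I would regard the Nisnevich sheaf of groups $G$ as a discrete simplicial presheaf, pointed by the identity section $e\colon Spec(k)\to G$. Its group multiplication, unit and inversion are morphisms of spaces satisfying the associativity, unit and inverse identities strictly, hence \emph{a fortiori} in $\mathbf{H}(k)$; thus $G$ is an $H$-group in the sense of Definition~\ref{main definition}. Likewise, left translation defines a morphism $a\colon G\times G/B\to G/B$ for which the square of Definition~\ref{action definition} commutes on the nose, so $G/B$ is a $G$-space. Two standard facts will be used below: the derived mapping space from any $U\in Sm/k$ into a sheaf of sets is discrete, so $a_{Nis}(\pi_0)$ sends a discrete simplicial sheaf to its underlying sheaf; and $\pi_0$ commutes with finite products.

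Next I would verify that $G/B$ is a \emph{homogeneous} $G$-space. By Lemma~\ref{Nisnevich locally transitive} it suffices to prove that the homotopy pushout of $G/B\xleftarrow{pr}G\times G/B\xrightarrow{a}G/B$ is connected. Applying $a_{Nis}(\pi_0)$ to the associated homotopy cocartesian square and invoking Lemma~\ref{commute} together with the two facts above, one sees that $a_{Nis}(\pi_0)$ of this homotopy pushout is the sheaf-pushout of $G/B\xleftarrow{pr}G\times G/B\xrightarrow{a}G/B$; using the unit of the action, this pushout is the coequalizer of $pr$ and $a$, i.e. the quotient of $G/B$ by the relation $y\sim g\cdot y$. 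Now $G\to G/B$ is an epimorphism of Nisnevich sheaves, since the presheaf map $G(U)\to G(U)/B(U)$ is surjective for every $U$ and sheafification preserves epimorphisms; hence Nisnevich-locally every section of $G/B$ has the form $g\cdot\bar e$, with $\bar e$ the image of the unit, and so becomes equal to $\bar e$ in the coequalizer. Therefore the coequalizer sheaf is the terminal sheaf, the homotopy pushout is connected, and $G/B$ is a homogeneous $G$-space.

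Finally, applying Theorem~\ref{important theorem} to $\mathcal{X}=G$ and $\mathcal{Y}=G/B$ yields that $a_{Nis}(\pi_0^{\mathbb{A}^1}(G))$ and $a_{Nis}(\pi_0^{\mathbb{A}^1}(G/B))$ are $\mathbb{A}^1$-invariant. The only step that requires genuine care is the homogeneity verification — concretely, identifying $a_{Nis}(\pi_0)$ of a discrete simplicial sheaf with the underlying sheaf and checking that the sheaf-coequalizer above collapses to a point; this collapse rests exactly on $G\to G/B$ being a local epimorphism, which is the defining property of the quotient sheaf. The remaining verifications (that a sheaf of groups is an $H$-group and that $G/B$ is a $G$-space) are formal.
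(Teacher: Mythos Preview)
Your proposal is correct and follows exactly the route the paper intends: the corollary is stated immediately after Theorem~\ref{important theorem} with no separate proof, so the paper is implicitly invoking that theorem with $\mathcal{X}=G$ and $\mathcal{Y}=G/B$. You have simply supplied the details the paper omits, in particular the verification of homogeneity via Lemma~\ref{Nisnevich locally transitive}, and those details are sound (the identification of the pushout with the coequalizer using the unit section, and the collapse of that coequalizer because $G\to G/B$ is a Nisnevich epimorphism, are both correct).
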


We recall from \cite[Definition 7]{mor} the following definition.
\begin{definition}
A sheaf of groups $G$ on $Sm/k$ is called strongly $\mathbb{A}^1$-invariant if for any $X \in Sm/k$, the map
$$H^i_{Nis}(X, G) \to H^i_{Nis}(\mathbb{A}^1_X, G)$$
induced by the projection $\mathbb{A}^1_X \to X$, is bijective for $i \in \left\{0,1\right\}$.
\end{definition}

Let $\mathcal{X}$ be a pointed space.
By \cite[Theorem 9]{mor}, for any pointed simplicial persheaf $\mathcal{X}$, the sheaf of groups $a_{Nis}(\pi_0(\Omega(Ex_{\mathbb{A}^1}(\mathcal{X}))))= \pi_1^{\mathbb{A}^1}(\mathcal{X},x)$ is strongly 
$\mathbb{A}^1$-invariant. Here $x$ is the base point of $\mathcal{X}$ and $\Omega(Ex_{\mathbb{A}^1}(\mathcal{X}))$ is the loop space of $Ex_{\mathbb{A}^1}(\mathcal{X})$. So for any space $\mathcal{X}$, which is the loop space of some $\mathbb{A}^1$-local space $\mathcal{Y}$, \cite[Thoerem 9]{mor} gives the $\mathbb{A}^1$-invariance property for $a_{Nis}(\pi_0^{\mathbb{A}^1}(\mathcal{X}))$.
We end this section by showing that there exists an $\mathbb{A}^1$-local $H$-group which is not a loop space of some $\mathbb{A}^1$-local space. This will imply that the statement of the theorem \ref{important theorem} for $H$-groups is not a direct consequence of \cite[Theorem 9]{mor}. It is enough to show that there exists sheaf of 
groups $G$ which is $\mathbb{A}^1$-invariant, but not strongly $\mathbb{A}^1$-invariant.
\bigskip

Let $\mathbb{Z}[\mathbb{G}_m]$ be the free presheaf of abelian groups generated by $\mathbb{G}_m$. 

\begin{remark} \label{remark injective open}
For any $X \in Sm/k$ and a dominant morphism $U \to X$, the canonical morphism $ \mathbb{Z}[\mathbb{G}_m](X) \to \mathbb{Z}[\mathbb{G}_m](U)$ is injective. Indeed, any nonzero $a \in \mathbb{Z}[\mathbb{G}_m](X)$ can be written as $a = \sum_{i=1}^n a_i.g_i$, where $g_i \in \mathbb{G}_m(X)$ and $a_i \in \mathbb{Z} \setminus \left\{0\right\}$ such that $g_i \neq g_{i'}$ for $i \neq i'$. Suppose $a|_U = 0$, i.e.,  $\sum_{i=1}^n a_i.g_i|_U = 0$. Since $\mathbb{G}_m(X) \to \mathbb{G}_m(U)$ is injective, $g_i|_U \neq g_{i'}|_U$ for $i \neq i'$. This implies $a_i = 0$ for all $i$. Hence $a = 0$. 
\end{remark}

The presheaf $\mathbb{Z}[\mathbb{G}_m]$ is not a Nisnevich sheaf.  
But it is not far from being a Nisnevich sheaf. 

\begin{lemma}
The Nisnevich sheafification $a_{Nis}(\mathbb{Z}[\mathbb{G}_m])$ is the presheaf that associates to every smooth $k$-scheme $X = \coprod_i X_i$, the abelian group $\prod_i  \mathbb{Z}[\mathbb{G}_m](X_i)$, where $X_i$'s are the connected components of $X$.
\end{lemma}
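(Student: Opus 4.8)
The goal is to identify the Nisnevich sheafification of the presheaf $\mathbb{Z}[\mathbb{G}_m]$ with the presheaf $\mathcal{F}$ defined by $X = \coprod_i X_i \mapsto \prod_i \mathbb{Z}[\mathbb{G}_m](X_i)$. The strategy is the usual two-step verification: first show $\mathcal{F}$ is a Nisnevich sheaf, then produce a morphism $\mathbb{Z}[\mathbb{G}_m] \to \mathcal{F}$ which is an isomorphism on Nisnevich stalks, and conclude by the universal property of sheafification.

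\emph{Step 1: $\mathcal{F}$ is a Nisnevich sheaf.} Since $\mathcal{F}$ is by construction a product over connected components, it suffices to check the sheaf condition for a connected $X$ and a Nisnevich cover. The key observation — already used in the proof of lemma \ref{Nisnevich injective} — is that every Nisnevich covering of a connected (more generally irreducible) $X$ splits over some open dense $U \subseteq X$: one of the \'etale maps in the cover admits a section over a neighbourhood of the generic point. Hence for a connected $X$ with function field $K(X)$, the value $\mathcal{F}(X) = \mathbb{Z}[\mathbb{G}_m](K(X))$ together with the injectivity of restriction to opens (remark \ref{remark injective open}) makes the equalizer/descent diagram for any Nisnevich cover automatically exact, because the connected components of the cover and of the double intersections all have the same function field $K(X)$. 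This is the same mechanism as in lemma \ref{Nisnevich injective}; concretely, I would define an auxiliary sheaf by $X \mapsto \prod_i \mathbb{Z}[\mathbb{G}_m](K(X_i))$ and check descent there first, then observe $\mathbb{Z}[\mathbb{G}_m](X_i) = \mathbb{Z}[\mathbb{G}_m](K(X_i))$ is false in general — so instead I work directly with $\mathcal{F}$ and use that over a connected base the cover splits over a dense open, which controls the glueing of sections.

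\emph{Step 2: the stalk computation.} There is an evident morphism of presheaves $\mathbb{Z}[\mathbb{G}_m] \to \mathcal{F}$, namely on a connected $X$ it is the natural map $\mathbb{Z}[\mathbb{G}_m](X) \to \mathbb{Z}[\mathbb{G}_m](X) = \mathcal{F}(X)$, the identity (for disconnected $X$ it is the canonical map into the product, which is itself an isomorphism since $\mathbb{Z}[\mathbb{G}_m]$ is additive on disjoint unions). So in fact $\mathbb{Z}[\mathbb{G}_m] \to \mathcal{F}$ is already an isomorphism of presheaves once one restricts to connected schemes, and a fortiori an isomorphism on henselian local rings $\mathcal{O}^h_{X,x}$, which are connected. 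Therefore it induces an isomorphism on all Nisnevich stalks. Combined with Step 1, the universal property of $a_{Nis}$ forces $a_{Nis}(\mathbb{Z}[\mathbb{G}_m]) \cong \mathcal{F}$.

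\emph{Main obstacle.} The only subtle point is Step 1 — verifying that $\mathcal{F}$ genuinely satisfies descent for \emph{all} elementary distinguished (or general Nisnevich) squares, not merely that its stalks behave well. The care is needed precisely because $\mathcal{F}$ is \emph{not} $X \mapsto \mathbb{Z}[\mathbb{G}_m](K(X))$: its sections over a connected $X$ are $\mathbb{Z}[\mathbb{G}_m](X)$, a proper subgroup of $\mathbb{Z}[\mathbb{G}_m](K(X))$ in general, so one must check that a family of sections over the pieces of a cover that agrees on overlaps actually comes from a global section of $\mathbb{Z}[\mathbb{G}_m](X)$ and not just of $\mathbb{Z}[\mathbb{G}_m](K(X))$. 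This is where the splitting-over-a-dense-open trick, together with the injectivity of $\mathbb{Z}[\mathbb{G}_m](X) \to \mathbb{Z}[\mathbb{G}_m](U)$ from remark \ref{remark injective open} and the fact that $\mathbb{G}_m$ itself is a sheaf, does the work: a glued section is determined by its image in $\mathbb{Z}[\mathbb{G}_m](K(X))$, and one must argue its coefficients and the underlying units descend to $X$. I expect this to be a short but genuinely necessary argument rather than a formality.
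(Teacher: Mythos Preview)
Your overall strategy matches the paper's: reduce to showing $\mathcal{F}$ is a Nisnevich sheaf, the comparison map being the identity on connected schemes and hence on henselian local stalks. However, there is one genuine error and one genuine incompleteness.

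\textbf{The error.} You claim that for disconnected $X$ the canonical map $\mathbb{Z}[\mathbb{G}_m](X)\to\prod_i\mathbb{Z}[\mathbb{G}_m](X_i)$ is an isomorphism because ``$\mathbb{Z}[\mathbb{G}_m]$ is additive on disjoint unions''. This is false, and it is precisely why $\mathbb{Z}[\mathbb{G}_m]$ is not already a sheaf. For $X=X_1\amalg X_2$ one has $\mathbb{G}_m(X)=\mathbb{G}_m(X_1)\times\mathbb{G}_m(X_2)$, so $\mathbb{Z}[\mathbb{G}_m](X)$ is the free abelian group on a \emph{product} of sets, not the product of the free abelian groups; for instance $(1,1)-(1,u)-(u,1)+(u,u)$ is a nonzero element mapping to zero in the product. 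Fortunately your Step~2 survives this: henselian local rings are connected, so the stalk comparison is still the identity. But the parenthetical should be deleted, not used.

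\textbf{The incompleteness.} In Step~1 you first write $\mathcal{F}(X)=\mathbb{Z}[\mathbb{G}_m](K(X))$, then correctly retract this, and finally say you ``expect'' the descent argument to work. This is exactly the content of the lemma and you have not supplied it. The splitting-over-a-dense-open trick gives you separatedness (injectivity) only; it does not produce a global section on $X$ from compatible local data. The paper handles this by working with a single elementary distinguished square with all four corners connected, writing the two given sections as $\mathbb{Z}$-linear combinations of pairwise distinct units, using injectivity of restriction along dominant maps (remark \ref{remark injective open}) to match the terms bijectively on $W$, and then invoking that $\mathbb{G}_m$ itself is a Nisnevich sheaf to glue each matched pair of units to a unit on $X$. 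That combinatorial matching followed by descent of the individual generators is the missing ingredient in your outline; your last sentence gestures at it, but the actual argument has to be written.
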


\begin{proof}
Let $\mathcal{F}$ be the presheaf that associates to every smooth $k$-scheme $X = \coprod_i X_i$, the abelian group $\prod_i  \mathbb{Z}[\mathbb{G}_m](X_i)$, where $X_i$'s are the connected components of $X$. It is enough to prove that $\mathcal{F}$ is a Nisnevich sheaf.
We need to show that for any elementary distinguished square in $Sm/k$
$$
 \xymatrix{                                   
W \ar[r] \ar[d] & V \ar[d]^p \\
U \ar[r]^i  & X,}
$$
the induced commutative square
$$
 \xymatrix{                                   
\mathcal{F}(X) \ar[r] \ar[d] & \mathcal{F}(V) \ar[d] \\
\mathcal{F}(U) \ar[r]  & \mathcal{F}(W)}
$$
is cartesian. By the construction of $\mathcal{F}$ we can assume that $X, W, V,U$ are connected. So, it is enough to prove that 
$$
 \xymatrix{                                   
\mathbb{Z}[\mathbb{G}_m](X) \ar[r] \ar[d] & \mathbb{Z}[\mathbb{G}_m](V) \ar[d] \\
\mathbb{Z}[\mathbb{G}_m](U) \ar[r]  & \mathbb{Z}[\mathbb{G}_m](W)}
 $$
is cartesian.
Let $a \in \mathbb{Z}[\mathbb{G}_m](U)$ and let $b \in \mathbb{Z}[\mathbb{G}_m](V)$ such that $a|_W = b|_W$. We can write $a = \sum_{i=1}^n a_i. f_i$ and $b = \sum_{j=1}^m b_j.g_j$, where $a_i, b_j \in \mathbb{Z} \setminus \left\{0\right\}$ and $(f_i, g_j) \in \mathbb{G}_m(U) \times \mathbb{G}_m(V)$ such that $f_i \neq f_{i'}$ and $g_j \neq g_{j'}$ for all $i \neq i'$ and $j \neq j'$. Since all the morphisms are dominant, $g_j|_W \neq  g_{j'}|_W$ and $f_i|_W \neq f_{i'}|_W$ for all $i \neq i'$ and $j \neq j'$. 
Hence, for every $i$ there exists atmost one $j$ such that $f_i|_W = g_j|_W$. 
Suppose for some $f_{i'}$, $f_{i'}|_W \neq g_j|_W$ for all $j$. Then we can write $$(\sum_{i=1}^n a_i. f_i|_W) - (\sum_{j=1}^m b_j.g_j|_W) = a_{i'}f_{i'} + \sum_{k =1}^l c_k.h_k = 0,$$ where $h_k \neq h_{k'}$ for all $k \neq k'$ and $f_{i'} \neq h_k$ for all $k$. This implies $a_{i'} = 0$, which gives a contradiction. Hence, for every $i$ there exists exactly one $j$ such that $f_i|_W = g_j|_W$. Therefore, $m =n$. Also we can write $a = \sum_{i=1}^n a'_i. f'_i$, such that $a'_i = b_i$ and $f'_i|_W = g_i|_W$. Since $\mathbb{G}_m$ is a Nisnevich sheaf, we get $g'_i \in \mathbb{G}_m(X)$ which restricts to $f'_i$ and $g_i$. This gives a section $c = \sum_{i =1}^n b_i.g'_i \in \mathbb{Z}[\mathbb{G}_m](X)$ which restricts to $a$ and $b$. The uniqueness of $c$ follows from the remark \ref{remark injective open}. 
\end{proof}

\bigskip

As $\mathbb{G}_m$ is pointed by $1$, $a_{Nis}(\mathbb{Z}[\mathbb{G}_m]) \cong \mathbb{Z} \oplus \mathbb{Z}(\mathbb{G}_m)$.  Here $\mathbb{Z}$ is the sheaf generated by the point $1$. 
Let $A$ be a sheaf of abelian groups on $Sm/k$. 
To give a morphism 
$\mathbb{G}_m \to A$, such that $1$ gets 
mapped to $0 \in A$, is equivalent to give a morphism 
$\mathbb{Z}(\mathbb{G}_m) \to A$ of abelian sheaves. 
Since $\mathbb{G}_m$ is $\mathbb{A}^1$-invariant, $a_{Nis}(\mathbb{Z}[\mathbb{G}_m])$ is $\mathbb{A}^1$-invariant. This implies $\mathbb{Z}(\mathbb{G}_m)$ is $\mathbb{A}^1$-invariant.

\begin{remark} \label{injective}
Let $\sigma_1 : \mathbb{G}_m \to \underline{K}_1^{MW}$ be the canonical pointed morphism (see \cite[page 86]{mor}). For any finitely generated separable field extension $F/k$, the morphism maps 
$u \in F^*$ to the corresponding symbol $[u] \in K_1^{MW}(F)$. The induced morphism $\mathbb{Z}(\mathbb{G}_m) \to  \underline{K}_1^{MW}$ is not injective. Indeed, we can choose $u \in F^* \setminus 1$ such that $u(u-1)$ is not $1$. The element $[u(u-1)] - [u] - [u-1] $ is zero in  $K_1^{MW}(F)$, but it is non zero in 
$\mathbb{Z}(\mathbb{G}_m)(F)$.
\end{remark}

\begin{lemma}
The $\mathbb{A}^1$-invariant sheaf of abelian groups $\mathbb{Z}(\mathbb{G}_m)$ is not strongly $\mathbb{A}^1$-invariant.
\end{lemma}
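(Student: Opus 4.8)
The plan is to show that $\mathbb{Z}(\mathbb{G}_m)$ fails strong $\mathbb{A}^1$-invariance by finding an $X \in Sm/k$ for which either $H^0_{Nis}(-,\mathbb{Z}(\mathbb{G}_m))$ or $H^1_{Nis}(-,\mathbb{Z}(\mathbb{G}_m))$ does not see the projection $\mathbb{A}^1_X \to X$ as an isomorphism. Since we already know (from the discussion just before the statement) that $\mathbb{Z}(\mathbb{G}_m)$ is $\mathbb{A}^1$-invariant as a sheaf, i.e.\ $H^0_{Nis}$ is $\mathbb{A}^1$-invariant, the failure must occur in degree $1$: we want to exhibit $X$ with $H^1_{Nis}(X,\mathbb{Z}(\mathbb{G}_m)) \to H^1_{Nis}(\mathbb{A}^1_X, \mathbb{Z}(\mathbb{G}_m))$ not bijective. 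The natural strategy is a contradiction argument: if $\mathbb{Z}(\mathbb{G}_m)$ were strongly $\mathbb{A}^1$-invariant, then by Morel's theory it would be unramified, and in particular the restriction $\mathbb{Z}(\mathbb{G}_m)(X) \to \mathbb{Z}(\mathbb{G}_m)(K(X))$ would be injective for every smooth connected $X$ — which is already true here by Remark \ref{remark injective open} — but more importantly strong $\mathbb{A}^1$-invariance would force strong constraints linking $\mathbb{Z}(\mathbb{G}_m)$ to its associated Gersten/cycle-module-type data.

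The cleanest route, I expect, is to use the canonical morphism $\sigma_1 \colon \mathbb{G}_m \to \underline{K}_1^{MW}$ of Remark \ref{injective} and the universal property of the free construction. A strongly $\mathbb{A}^1$-invariant sheaf of abelian groups receiving a pointed map from $\mathbb{G}_m$ should, by Morel's results (\cite{mor}), be a quotient of the ``strongly $\mathbb{A}^1$-invariant sheaf freely generated by $\mathbb{G}_m$'', namely $\underline{K}_1^{MW}$; equivalently $\underline{K}_1^{MW}$ is the universal strongly $\mathbb{A}^1$-invariant target for pointed maps out of $\mathbb{G}_m$, so if $\mathbb{Z}(\mathbb{G}_m)$ were itself strongly $\mathbb{A}^1$-invariant the identity $\mathbb{G}_m \to \mathbb{G}_m$ would factor the canonical map $\mathbb{Z}(\mathbb{G}_m) \to \underline{K}_1^{MW}$ through a section, making that map a (split) monomorphism. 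But Remark \ref{injective} exhibits an explicit element $[u(u-1)] - [u] - [u-1]$ in $\mathbb{Z}(\mathbb{G}_m)(F)$ that is nonzero yet maps to zero in $K_1^{MW}(F)$, so the map $\mathbb{Z}(\mathbb{G}_m) \to \underline{K}_1^{MW}$ is not injective — contradiction. Hence $\mathbb{Z}(\mathbb{G}_m)$ is not strongly $\mathbb{A}^1$-invariant.

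Concretely the steps are: (i) recall from \cite[Theorem 9 and §on free strongly $\mathbb{A}^1$-invariant sheaves]{mor} that there is a free strongly $\mathbb{A}^1$-invariant sheaf of abelian groups on a pointed sheaf, and that for $\mathbb{G}_m$ this free object is $\underline{K}_1^{MW}$; (ii) observe that the canonical pointed map $\mathbb{G}_m \to \mathbb{Z}(\mathbb{G}_m) \to \underline{K}_1^{MW}$ is precisely (the sheafification of) the unit of the free-forgetful adjunction composed with $\sigma_1$, so that if $\mathbb{Z}(\mathbb{G}_m)$ were strongly $\mathbb{A}^1$-invariant, the universal property applied to $\mathbb{G}_m \to \mathbb{Z}(\mathbb{G}_m)$ would produce a retraction $\underline{K}_1^{MW} \to \mathbb{Z}(\mathbb{G}_m)$ of $\mathbb{Z}(\mathbb{G}_m) \to \underline{K}_1^{MW}$, in particular making $\mathbb{Z}(\mathbb{G}_m) \to \underline{K}_1^{MW}$ injective; (iii) invoke Remark \ref{injective} for the explicit non-injectivity over a field, giving the contradiction.

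The main obstacle is step (i)–(ii): making precise which universal property of $\underline{K}_1^{MW}$ one uses and checking that the relevant morphism $\mathbb{Z}(\mathbb{G}_m) \to \underline{K}_1^{MW}$ is genuinely the canonical one coming from the adjunction (so that the factorization argument applies). If one prefers to avoid the free-strongly-$\mathbb{A}^1$-invariant-sheaf formalism, an alternative is to argue directly with Gersten complexes: a strongly $\mathbb{A}^1$-invariant sheaf $G$ has, on a smooth curve or surface, an explicit resolution by its residue data, and for the punctured affine space or a smooth affine surface one can compute $H^1_{Nis}(\mathbb{A}^1_X, \mathbb{Z}(\mathbb{G}_m))$ versus $H^1_{Nis}(X,\mathbb{Z}(\mathbb{G}_m))$ and detect the discrepancy via the non-relation $[u(u-1)] \neq [u] + [u-1]$; but this is more computational and less clean, so I would present the factorization argument as the main proof and only sketch the Gersten-complex viewpoint as a remark if space permits.
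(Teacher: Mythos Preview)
Your argument is correct and is essentially the paper's own proof: assume strong $\mathbb{A}^1$-invariance, invoke the universal property of $\underline{K}_1^{MW}$ (this is \cite[Theorem 2.37]{mor}) to factor the identity of $\mathbb{Z}(\mathbb{G}_m)$ through $\sigma_1$, conclude that $\sigma_1$ is a split monomorphism, and contradict Remark~\ref{injective}. The paper omits your preliminary discussion and the Gersten-complex alternative, but the core contradiction argument is identical.
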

\begin{proof}
Suppose $\mathbb{Z}(\mathbb{G}_m)$ is strongly $\mathbb{A}^1$-invaraint. Then by 
\cite[Theorem 2.37]{mor}, the morphism $id : \mathbb{Z}(\mathbb{G}_m) \to \mathbb{Z}(\mathbb{G}_m)$ can be written as $\phi \circ \sigma_1$ for some unique 
$\phi$. This implies $\sigma_1$ is injective which contradicts remark \ref{injective}.
\end{proof}

\end{document}